\definecolor{purple}{RGB}{148,0,211}
\definecolor{green}{RGB}{50,205,50}
\newtheorem{theorem}{Theorem}[section]
\newtheorem{lemma}[theorem]{Lemma}
\newtheorem{claim}[theorem]{Claim}
\newtheorem{conjecture}[theorem]{Conjecture}
\newcommand{\prob}{\mathbb{P}}
\newcommand{\eps}{{\varepsilon}}
\newcommand{\E}{\mathbb{E}}
\newcommand{\mc}[1]{\mathcal{#1}}
\newcommand{\cH}{\mathcal{H}}
\newcommand{\rbrac}[1]{\left(#1\right)} 
\newcommand{\sbrac}[1]{\left[ #1\right]} 
\newcommand{\cbrac}[1]{\left\{ #1\right\}} 
\tikzstyle{arch} = [out=30, in=150]
\newcommand{\walk}{\text{walk}}
\newcommand{\walks}{\text{walks}}
\title{Edge-coloring $K_{n, n}$ with no 2-colored $C_{2k}$}
\author{Deepak Bal\footnote{Department of Mathematics, Montclair State University, Montclair, NJ. \texttt{deepak.bal@montclair.edu}.}
\and Patrick Bennett\footnote{Department of Mathematics, Western Michigan University, Kalamazoo, MI. \texttt{patrick.bennett@wmich.edu}. The research of P. Bennett was partially supported by Simons Foundation Grant \#848648.}  
}
\date{}
\begin{document}

\maketitle
\begin{abstract}
The generalized Ramsey number $r(G, H, q)$ is the minimum number of colors needed to color the edges of $G$ such that every isomorphic copy of $H$ has at least $q$ colors. In this note, we improve the upper and lower bounds on $r(K_{n, n}, C_{2k}, 3)$. Our upper bound answers a question of Lane and Morrison. For $k=3$ we obtain the asymptotically sharp estimate $r(K_{n, n}, C_6, 3) = \frac{7}{20} n + o(n)$. 
\end{abstract}
\section{Introduction}
For graphs $G, H$ we define an {\em $(H, q)$-coloring of $G$} to be an edge-coloring of $G$ where every isomorphic copy of $H$ has at least $q$ distinct colors. We define the {\em generalized Ramsey number} $r(G, H, q)$ to be the minimum number of colors needed in an $(H, q)$-coloring of $G$. In 2000 Axenovich, F\"uredi and Mubayi \cite{AFM} first defined $r(G, H, q)$ in full generality. They also gave some upper and lower bounds, mostly focusing on the case where $G=K_{n, n}$. This problem is a generalization of the one first proposed in 1975 by Erd\H{o}s and Shelah~\cite{erdos1975}, who were interested in the case where $G, H$ are complete $k$-uniform hypergraphs. In 1997 Erd\H{o}s and Gy\'arf\'as~\cite{EG} initiated the systematic study of the case where $G, H$ are complete graphs. Since then, this problem has generated significant interest, see e.g.~\cite{axenovich2000, BBHZ, BEHK,BCD, BCDP,CH1,CH2,CFLS,JM, LM, Mubayi1,mubayi2004}.

Axenovich, F\"uredi and Mubayi \cite{AFM} paid particular attention to $r(K_{n, n}, C_4, 3)$ and proved that
\[
\frac{2}{3}n \le r(K_{n, n}, C_4, 3) \le n+1,
\]
noting that ``improving this upper bound seems to be very hard.'' In a recent breakthrough, Joos and Mubayi \cite{JM} improved the upper bound to asymptotically match the lower bound. Their proof is a clever application of the conflict-free hypergraph matching method of Glock, Joos, Kim, K\"uhn, and Lichev~\cite{GJKKL}. This method gives sufficient conditions for a hypergraph to have an almost-perfect matching which avoids containing any member of a family of submatchings called ``conflicts'' (see Delcourt and Postle \cite{DP} for similar results). The main observation of Joos and Mubayi \cite{JM} is that one can obtain colorings for upper bounds on certain generalized Ramsey problems by encoding the coloring as a matching in a suitably defined hypergraph avoiding an appropriate family of conflicts. Joos, Mubayi and Smith \cite{JMS} further streamlined this method of coloring via conflict-free hypergraph matchings (giving the version we will use in this paper). This general method has recently been used to prove several more upper bounds on generalized Ramsey problems \cite{BBHZ, BCD, LM}. Lane and Morrison \cite{LM} and independently the present authors together with Heath and Zerbib \cite{BBHZ} showed that 
\[
r(K_n, C_k, 3) = \frac{1}{k-2} n + o(n)
\] for $k\ge 5$ (the case $k=3$ is easy and the case $k=4$ was done previously by Joos and Mubayi \cite{JM}).

The papers \cite{BBHZ, LM} also investigated the generalized Ramsey number $r(K_{n, n}, C_{2k}, 3)$ for $k \ge 3$ and obtained the bounds
\begin{equation}\label{eqn:previousbounds}
 \frac{1}{2(k-1)} n + o(n) \le r(K_{n, n}, C_{2k}, 3) \le \frac{2}{\left \lfloor \frac{1+\sqrt{8k-7}}{2} \right \rfloor ^2 -1 } n + o(n).  
\end{equation}
 Lane and Morrison \cite{LM} asked whether one can improve the upper bound by removing the integer floor in the denominator. In this note we improve both the upper and lower bounds in \eqref{eqn:previousbounds}, answering Lane and Morrison's question in the affirmative. In fact, our improved upper bound is better than \eqref{eqn:previousbounds} by a constant factor for large $k$. Our main theorem is as follows.

\begin{theorem}\label{thm:main}
For all $k \ge 4$ we have
\begin{equation}\label{eqn:main1}
    \frac{4 \sqrt{k^4 -6k^3+12k^2-9k+2} - 4k^2 + 12k -5}{2(k-2)}n \le r(K_{n,n}, C_{2k}, 3) \le \frac{3k-2}{2(k-1)(2k-1)}n +o(n),
\end{equation} 
and for $k=3$ we have
\begin{equation}\label{eqn:main2}
r(K_{n,n}, C_{6}, 3) = \frac{7}{20}n +o(n) . 
\end{equation}
\end{theorem}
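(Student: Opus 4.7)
The plan is to apply the conflict-free hypergraph matching theorem of Joos, Mubayi and Smith \cite{JMS}, following the framework of \cite{JM, BBHZ, LM}. I would construct an auxiliary hypergraph $\cH$ whose vertex set is $E(K_{n,n}) \times [C]$, where $C$ is slightly larger than $\frac{3k-2}{2(k-1)(2k-1)} n$; a matching in $\cH$ encodes a color assignment. The hyperedges of $\cH$ correspond to a carefully chosen family of color-class \emph{templates}: bipartite subgraphs of $K_{n,n}$, each of size approximately $\frac{2(k-1)(2k-1)}{3k-2}$, each paired with a single color. Conflicts are small tuples of templates whose union contains a $C_{2k}$. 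After verifying the regularity, codegree, and conflict codegree hypotheses, the theorem produces a nearly-perfect conflict-free matching, yielding a valid $(C_{2k}, 3)$-coloring of all but $o(n^2)$ edges of $K_{n,n}$; the remaining edges can be finished with $o(n)$ fresh colors.

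\textbf{Lower bound for $k \ge 4$.} I would argue by contradiction: suppose a $(C_{2k}, 3)$-coloring uses only $\alpha n$ colors. Then the average color class has $n/\alpha$ edges, and the constraint that $E_{c_1} \cup E_{c_2}$ is $C_{2k}$-free for every pair of colors forces bounds on the degree sequences and path counts in individual color classes. I would double-count paths of length $k-1$ (or possibly $k$) across pairs of color classes, use convexity or Cauchy--Schwarz to convert moment bounds into averages, and derive a quadratic inequality in $\alpha$. The explicit formula $\frac{4\sqrt{k^4-6k^3+12k^2-9k+2}-4k^2+12k-5}{2(k-2)}$ is precisely the positive root of such a quadratic, confirming that this is the correct shape of the calculation.

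\textbf{The sharp case $k=3$.} Specializing the upper bound (whose proof works unchanged for $k=3$) gives $\frac{7}{20}n + o(n)$. For the lower bound, the general formula at $k=3$ evaluates to $\frac{4\sqrt{2}-5}{2}n \approx 0.332\,n$, which is short of $\frac{7}{20} = 0.35$. To close the gap I would exploit the limited structural repertoire of $C_6$: a 2-colored $C_6$ splits into only a few combinatorial types according to how its 6 cyclic edges distribute between two colors. A refined double-counting that separately tracks matching edges, cherries (paths of length 2), and longer paths within each color class, together with an enumeration of the ways such structures can combine to form a $C_6$, should sharpen the quadratic inequality enough to reach exactly $\frac{7}{20}$.

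\textbf{Main obstacle.} The hardest step is settling on the right template for the upper bound: it must be large enough to achieve the claimed color-class density, structured enough that few pairs of templates span a $C_{2k}$, and regular enough to satisfy the codegree hypotheses. On the lower-bound side, the main challenge is choosing exactly which moments of the degree and path distributions to control -- too few constraints give a weaker bound, while too many become intractable. The fact that the upper and lower bounds meet exactly at $k = 3$ but leave a small gap for $k \ge 4$ suggests that a problem-specific tightening of the lower bound is available only in the smallest case.
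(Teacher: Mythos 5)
Your outline correctly identifies the conflict-free hypergraph matching framework for the upper bound and a double-counting/convexity argument for the lower bound, but it misses the paper's central new idea on each side, and the $k=3$ refinement you propose goes in a different direction than the paper's.

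\textbf{Upper bound.} The crucial ingredient you omit is \emph{which} bipartite templates to use and how they are allowed to overlap. The paper packs copies of $K_{k-1,2k-1}$, but with a structural constraint: on each side of $K_{n,n}$ it first fixes a sparse linear $(k-1)$-uniform hypergraph $\mathcal{S}_A$ (resp.\ $\mathcal{S}_B$), essentially a random sparsification of a Steiner system $S(2,k-1,n)$, and requires every template's small side to be a hyperedge of $\mathcal{S}_A\cup\mathcal{S}_B$, while pairs from the large side must avoid $\mathcal{S}_A\cup\mathcal{S}_B$ entirely. This is precisely what lets many colored bicliques \emph{share} a $(k-1)$-side without ever creating a $2$-colored $C_{2k}$ (a $K_{k-1,b}$ with $b\ge k$ contains no $C_{2k}$), and that sharing is the source of the improvement from the previous bound $\frac{2}{\lfloor(1+\sqrt{8k-7})/2\rfloor^2-1}n$ down to $\frac{3k-2}{2(k-1)(2k-1)}n$. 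A generic ``carefully chosen family of bipartite templates'' with no such overlap structure gives a strictly worse constant; your stated template size $\frac{2(k-1)(2k-1)}{3k-2}$ also does not match the actual template $K_{k-1,2k-1}$, which has $(k-1)(2k-1)$ edges. Moreover, the vertex set of the matching hypergraph in the paper is not $E(K_{n,n})\times[C]$; it has a $P$-part equal to $E(K_{n,n})$ plus a $Q$-part recording vertex--color incidences and the forbidden pairs $E_A\cup E_B$, which is what encodes the structural constraint into the matching problem.

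\textbf{Lower bound for $k\ge4$.} Your description of ``double-counting paths and using convexity'' is directionally consistent but skips the two actual steps the paper relies on. First, a structural claim: any connected monochromatic subgraph $C$ has $\delta(C)\le k-1$, and if equality holds then one side of its bipartition has exactly $k-1$ vertices (proved via forbidden $P_{2k}$). Second, a vertex-stripping process is run on each monochromatic component to define counts $a_0(C),\dots,a_{k-1}(C)$ with $\sum a_j=|V(C)|$, $\sum j a_j=|E(C)|$, and the convexity step is applied not to path counts but to $\sum_C\binom{a_{k-1}(C)}{2}\le 2\binom n2$, using the observation that two $K_{k-1,\cdot}$'s from distinct monochromatic components cannot share two vertices on their large sides. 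Your quadratic-in-$\alpha$ intuition matches the final optimization of $h(a)=1+\ell a^2/(2+a)-a$, but without the claim and stripping process there is no route to the explicit inequality.

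\textbf{The $k=3$ case.} Here your proposed approach (a case analysis of how a $2$-colored $C_6$ decomposes into monochromatic pieces) is genuinely different from, and considerably more involved than, the paper's. The paper gets $\frac{7}{20}n$ essentially for free from the $k\ge4$ argument: equation \eqref{eqn:1} forces $A\le n^2/(k-1)=n^2/2$, and for $\ell=2$ the function $h(a)$ is decreasing on $[0,\sqrt 8-2)\supseteq[0,1/2]$, so the minimum over the feasible domain is attained at $a=1/2$, giving $h(1/2)=7/10$ and $g\ge 7n/20$. Your alternative might work but would need to be carried out; as written it is speculation, whereas the paper's step is a one-line domain restriction. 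It is also worth noting the paper explains why this trick fails for $k\ge4$: the feasible bound $A\le n^2/(k-1)$ lies to the right of the unconstrained minimizer of $h$, so restricting the domain buys nothing.

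In summary, the proposal captures the general method but omits the Steiner-system overlap trick (the actual source of the improved upper constant), the min-degree claim and stripping process (the backbone of the lower bound), and proposes an unnecessary and unexecuted detour for $k=3$.
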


For large $k$, the lower bound behaves like $(\frac12 + O(1/k) )\frac nk$ and the upper bound behaves like $(\frac34 + O(1/k) )\frac nk$. So the ratio of the upper bound to the lower bound is approximately $3/2$.  We briefly compare our new bounds \eqref{eqn:main1} to the previous bounds \eqref{eqn:previousbounds}. For $k=4$, \eqref{eqn:previousbounds} gives $r(K_{n, n}, C_{8}, 3) \in [0.166n, 0.25n]$. The new bounds \eqref{eqn:main1} give $r(K_{n, n}, C_{8}, 3) \in [0.227n, 0.239n]$. For large $k$, the upper bound in \eqref{eqn:main1} is approximately $3/4$ of the upper bound in \eqref{eqn:previousbounds}. The lower bound in \eqref{eqn:main1} is an improvement over the lower bound in \eqref{eqn:previousbounds} for all $k$, but unfortunately for large $k$ they are approximately the same. We conjecture that the upper bound in \eqref{eqn:main1} is the correct value (see the concluding remarks). 

We now describe the main idea which allows us to achieve this significant improvement in the upper bound. To prove an upper bound, we need to provide a coloring of $K_{n,n}$ without $2$-colored copies of $C_{2k}$. One might expect that each color class (essentially) looks like vertex disjoint copies of $K_{k-1, b}$ for some $b\ge k$. This would avoid all monochromatic $C_{2k}$. One can use older results (e.g., \cite{PipSpe}) to decompose almost all the edges of $K_{n,n}$ into such color classes. However, two of our colored bicliques may intersect in a way which produces a 2-colored $C_{2k}$. It it also possible that some set of more than two colored bicliques could interact in such a way that a 2-colored $C_{2k}$ is formed. One can use the method of conflict free matchings to avoid such interactions. We will see in Section \ref{sec:UB} when we define our hypergraph and conflict system that any matching in the hypergraph avoids 2-colored $C_{2k}$s arising from exactly two colored bicliques, and the conflict system consists of sets of more than two colored bicliques. This is the approach taken in \cite{LM} and \cite{BBHZ}. Our improvement comes from the observation that two colored bicliques of the form $K_{k-1, b}$ can actually intersect in a way which does not produce a $C_{2k}$. Indeed, they completely share their sides of size $k-1$. In our coloring, we initially set aside a small (subquadratic) collection of $(k-1)$-sets on each side of the partition of $K_{n,n}$. Each copy of $K_{k-1, b}$ that we pack must have its $k-1$ side in this collection, and  its $b$ side can only intersect an element of the collection in at most one vertex. By forcing the copies of $K_{k-1, b}$ to overlap in this way, we obtain a more efficient packing and hence use fewer colors. A similar idea (packing colored subgraphs that are forced to intersect in certain ways) was used by the second author, Cushman and Dudek \cite{BCD}.

The organization for the rest of the paper is as follows. We prove the lower bounds in Theorem \ref{thm:main} in Section \ref{sec:LB}, and the upper bounds in Section \ref{sec:UB}. Concluding remarks are in Section \ref{sec:conclusion}.


Throughout the paper we let $A \cup B$ be the bipartition of $K_{n, n}$. We use standard asymptotic notation, and for the rest of the paper all asymptotics are as $n \rightarrow \infty$.

\section{Lower bound for $k\ge 3$} \label{sec:LB}
Suppose we have a $(C_{2k}, 3)$-coloring of $K_{n, n}$ using $g$ colors.
\begin{claim}\label{clm:mindeg}
    For each connected monochromatic subgraph $C\subseteq K_{n, n}$, we have $\delta(C) \le k-1$. Furthermore if $\delta(C)=k-1$ then in the bipartition of $C$, one part has exactly $k-1$ vertices. 
\end{claim}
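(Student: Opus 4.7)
The plan is to extract from the $(C_{2k}, 3)$-coloring hypothesis the stronger statement that $C$ contains no path of length $2k-1$. Indeed, if $u w_1 \cdots w_{2k-1}$ were such a path in $C$, then $u$ and $w_{2k-1}$ lie on opposite sides of $A \cup B$ (odd length), so $u w_{2k-1} \in E(K_{n,n})$; whatever color this edge has, the resulting cycle is a $C_{2k}$ using at most two colors, contradicting the coloring property. With this in hand I would take a longest path $P = v_0 v_1 \cdots v_\ell$ in $C$: all neighbors of $v_0$ (and of $v_\ell$) lie on $P$, at positions of opposite parity to their own.

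For the first assertion, if $\delta(C) \ge k$, then $v_0$ would need $k$ neighbors at the odd positions in $\{1, \ldots, \ell\}$, forcing $\ell \ge 2k - 1$, a contradiction.

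For the second assertion, assume $\delta(C) = k - 1$. The same count gives $\deg_C(v_0) = \deg_C(v_\ell) = k - 1$ and $\ell \in \{2k - 3, 2k - 2\}$. When $\ell = 2k - 3$, $v_0 \in A$ is adjacent to all of $B \cap V(P)$ and $v_\ell \in B$ to all of $A \cap V(P)$; any off-path neighbor of any vertex of $P$ would strictly extend $P$, so $V(C) = V(P)$ and both bipartition classes of $C$ have exactly $k - 1$ vertices. When $\ell = 2k - 2$, the vertices $v_0, v_\ell \in A$ are each adjacent to all of $B \cap V(P) = \{v_1, v_3, \ldots, v_{2k-3}\}$; what remains is to show that no vertex of $B \setminus V(P)$ lies in $V(C)$.

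The heart of the argument is a sequence of length-$(2k-1)$ path constructions in the case $\ell = 2k - 2$. First, for any interior $A$-vertex $v_{2i}$ ($1 \le i \le k - 2$) with a purported off-path $B$-neighbor $y$, the path $y, v_{2i}, v_{2i-1}, v_0, v_{2i+1}, v_{2i+2}, \ldots, v_\ell, v_1, v_2, \ldots, v_{2i-2}$ (with a simpler variant when $i = 1$) has length $2k - 1$; this rules out such $y$ and forces each interior $v_{2i}$ to be adjacent to all of $B \cap V(P)$, so $V(P)$ induces $K_{k, k-1}$. If some $w \in B \setminus V(P)$ were then in $V(C)$, a shortest $w$-to-$V(P)$ path would terminate either at a $B$-off-path vertex adjacent to an interior $A$-on-path vertex (already ruled out) or at an $A$-off-path vertex $u_1$ adjacent to some $v_{2m+1}$, in which case the zigzag $w, u_1, v_{2m+1}, v_0, v_1, v_2, \ldots, v_{2m}, v_{2m+3}, v_{2m+2}, \ldots, v_{2k-3}, v_{2k-4}$ of length $2k - 1$ finishes the contradiction. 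The main obstacle is the bookkeeping needed to verify that every edge in these paths has been established (as a path edge, an edge incident to $v_0$ or $v_\ell$, or an edge of the $K_{k, k-1}$) and that every listed vertex is distinct; the extremal values of $i$ and $m$ require symmetric variants.
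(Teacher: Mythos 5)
Your proof is correct and takes a genuinely different route from the paper's. The paper invokes the standard result that a connected graph with minimum degree $d$ contains a path on $2d$ vertices, then works from a $P_{2k-2}$ through a $C_{2k-2}$ to a $P_{2k-1}$ whose endpoints have full adjacency into $B\cap V(P)$, finishing with a single zigzag. You instead argue from first principles with a longest path $v_0\cdots v_\ell$: both endpoints have all their neighbors on $P$, which together with $\delta(C)=k-1$ pins down $\ell\in\{2k-3,2k-2\}$; in the harder case $\ell=2k-2$ you first establish the full biclique $K_{k,k-1}$ on $V(P)$ (by ruling out off-path $B$-neighbors of every interior $A$-vertex, whence each such vertex is adjacent to all of $B\cap V(P)$ by the degree bound) before considering any off-path $B$-vertex. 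This intermediate structural step makes the final case split on the last edge of a shortest $w$-to-$V(P)$ path very clean, and it buys you a self-contained argument with no external citation. Two small things to tighten in the write-up: (i) in the final zigzag, the $B$-vertex attached to $u_1$ should be the predecessor of $u_1$ on the shortest $w$-to-$V(P)$ path; this vertex lies in $B\setminus V(P)$ but need not equal $w$ when the shortest path has length at least $3$ (the argument is otherwise unchanged); and (ii) as you already flag, the displayed paths break at the extremal indices — for instance at $m=k-2$ the vertex $v_{2m+3}$ is out of range, and the variant $w',u_1,v_{2k-3},v_0,v_1,\ldots,v_{2k-4}$ should be written out explicitly, though it is routine.
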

 \begin{proof}
     First suppose for contradiction that $\delta(C) \ge k$. Then $P_{2k} \subseteq C$ (see, for example \cite{Diestel}) which contradicts that we have a $(C_{2k}, 3)$-coloring. Thus we know $\delta(C) \le k-1$. Suppose now that $\delta(C) = k-1$ and, for contradiction, suppose that both parts of the bipartition of $C$ have at least $k$ vertices. Now we know $P_{2k-2} \subseteq C$ (similarly to the previous case). If the endpoints of this $P_{2k-2}$ are not adjacent, then each of them has a neighbor outside of the $P_{2k-2}$. Thus we can extend to a $P_{2k}$ and we have a contradiction. Otherwise we get a $C_{2k-2}$, and since $C$ is connected on at least $2k$ vertices we know there is some vertex $v$ not on the $C_{2k-2}$ such that $v$ has a neighbor in the $C_{2k-2}$, meaning that we have a $P_{2k-1} \subseteq C$. Denote this copy of $P_{2k-1}$ by $P=(v_1, \ldots v_{2k-1})$. Without loss of generality, say that $v_1, v_3, \ldots v_{2k-1} \in A$ and the other $k-1$ vertices of $P$ are in $B$. If $P$ can be extended to a copy of $P_{2k}$ then we are done, so assume to the contrary that $N(v_1)=N(v_{2k-1})=\{v_2, v_4, \ldots v_{2k-2}\}$. Since $P$ only has $k-1$ vertices in $B$, $C$ has an edge with one endpoint $v_{2i-1} \in \{v_3,v_5 \ldots v_{2k-3}\}$ and the other endpoint $w \in B \setminus V(P)$. But now we have the path $(w, v_{2i-1}, v_{2i-2}, \ldots , v_1, v_{2i}, v_{2i+1}, \ldots, v_{2k-1})$ which is a $P_{2k}$ so we are done. 
 \end{proof}

 For each monochromatic component $C$, we define a sequence $a_0(C), \ldots, a_{k-1}(C)$ in terms of the following stripping process on $C$. If $C$ does not contain a subgraph of minimum degree $k-1$, then we iteratively remove the lowest degree vertex until none are left.  Otherwise (i.e. when there is some subgraph of $C$ with minimum degree $k-1$), we remove vertices of smallest degree until we arrive at some $C' \subseteq C$ of minimum degree $k-1$. If $C'$ was disconnected then each component of $C'$ would have a $P_{2k-2}$ and so $C$ would have easily had a $P_{2k}$, which is a contradiction. Therefore $C'$ is connected. Now by Claim \ref{clm:mindeg}, $C'$ has exactly $k-1$ vertices in one of the parts of the bipartition, say $B$. At that point we continue removing vertices from $V(C') \cap A$ until none are left, and then finally we remove all the (now isolated) vertices in $V(C') \cap B$. Finally, we let $a_j(C)$ be the number of vertices that had degree $j$ at the time they were removed.   By counting vertices and edges, we have 
 \begin{equation*}
     \sum_{j=0}^{k-1} a_j(C) = |V(C)|, \qquad \sum_{j=0}^{k-1} j a_j(C) = |E(C)|.
 \end{equation*}
Therefore, letting  $\mc{F}$ is the set of all monochromatic components (in all colors), we have
\begin{equation}\label{eqn:1}
 \sum_{C \in \mc{F}}  \sum_{j=0}^{k-1} j a_j(C) = n^2
\end{equation}
and
 \begin{equation}\label{eqn:2}
    \sum_{C \in \mc{F}} \sum_{j=0}^{k-1} a_j(C) = 2ng.
 \end{equation}
 Let $\mc{F'}\subseteq \mc{F}$ be the set of components $C$ with $a_{k-1}(C) \ge k$. Note that if $C\in \mc{F}'$, then $a_0(C) \ge k-1$  by definition of the stripping process. 
Now we take $(k-2)$ times line \eqref{eqn:2} and subtract line \eqref{eqn:1} to obtain
\begin{align}
    2(k-2)ng - n^2 &= \sum_{C \in \mc{F}}  \Big( (k-2)a_0(C) + (k-3)a_1(C) + \ldots + 0a_{k-2}(C) - a_{k-1}(C) \Big)\nonumber\\
    &\ge \sum_{C\in\mc{F'}} \Big( (k-2)(k-1) - a_{k-1}(C) \Big).\label{eqn:3}
\end{align}
To see the inequality, note that for all $C\in\mc{F}\setminus \mc{F}'$, $a_{k-1}(C)=0$ or $a_{k-1}(C)=k-1$ and in the latter case, we also have $a_0(C)\ge k-1$.
Now letting $\ell:=(k-1)(k-2)$ and $A:=\sum_{C\in\mc{F}'}a_{k-1}(C)$, we have that
\begin{equation}\label{eqn:gLBwithF}
    2(k-2)ng \ge n^2 + \ell|\mc{F}'| - A.
\end{equation}

Now suppose we have two components $C, C' \in \mc{F}'$. $C$ contains a copy $K$ of $K_{k-1, a_{k-1}(C)}$. Likewise $C'$ contains a copy $K'$ of $K_{k-1, a_{k-1}(C')}$. Suppose further that $K$ and $K'$ share two vertices $u, v$ which are both in the larger parts of the bipartitions of $K$ and $K'$. More precisely, we are supposing that $u, v$ are contained in the part of $K$ containing $a_{k-1}(C)$ vertices, and in the part of the bipartition of $K'$ containing $a_{k-1}(C')$ vertices. Then $C \cup C'$ contains a $C_{2k}$ with only two colors, which is a contradiction. Crucially, this is true even if $C, C'$ share more vertices. This implies that

\begin{equation}\label{eqn:4}
   2 \binom n 2 \ge \sum_{C \in \mc{F}'} \binom{a_{k-1}(C)}{2} \ge |\mc{F}'| \binom{\frac{A}{|\mc{F}'|}}{2}, 
\end{equation}
where the second inequality follows by convexity. Rearranging, we get that
$2n^2|\mc{F}'|\ge A(A-|\mc{F}'|)$ and so
\begin{equation}
    |\mc{F}'|\ge \frac{A^2}{2n^2 + A}.
\end{equation}
Plugging this bound into \eqref{eqn:gLBwithF}, we get
\begin{equation}
    2(k-2)ng \ge n^2 + \ell\frac{A^2}{2n^2 + A} - A.
\end{equation}
The function $h(a) = 1+\ell\frac{a^2}{2+a} - a$ (for $a\ge 0$) has a minimum value of $3-4\ell + 4\sqrt{\ell(\ell-1)}$ at $a=2\rbrac{\sqrt{\frac{\ell}{\ell-1}} -1}$. Thus we have that 
\[2(k-2)ng \ge n^2h(A/n^2)\ge n^2\cdot(3-4\ell + 4\sqrt{\ell(\ell-1)})\] and so
\[g\ge \frac{3-4\ell + 4\sqrt{\ell(\ell-1)}}{2(k-2)}n\]
which is equivalent to the lower bound displayed in \eqref{eqn:main1}.

\subsection{Improved lower bound for $k=3$}
When $k=3$, the result proved above shows that $f(K_{n,n}, C_6, 3)\ge (2\sqrt{2} - \frac{5}{2})n\approx 0.3284n$.
We now show that  $f(K_{n,n}, C_6, 3) \ge \frac{7}{20}n = 0.35n$. This is the bound claimed in Theorem \ref{thm:main} which asymptotically matches the bound from the upper bound construction in Section \ref{sec:UB}. 

Note that when $k=3$, we have that $\ell=2$. 
The function $h(a) = 1+ \frac{2a^2}{2+a} -a$ is decreasing for $0\le a < \sqrt{8}-2$ and this interval includes $a=1/2$. Equation \eqref{eqn:1} implies that $A\le \frac{n^2}{2}$ and so we have $2ng\ge n^2h(A/n^2) \ge n^2h(1/2) = 7n^2/10$. So we conclude that $g \ge 7n/20$ as desired. This completes the proof of all lower bounds in Theorem \ref{thm:main}.

Note that this technique for improvement doesn't 
work when $k\ge 4$ since the corresponding inequality implied by \eqref{eqn:1} is that $A\le \frac{n^2}{k-1}$. The value $a = 1/(k-1)$ occurs to the right of the minimum of the function $h(a)$ and so this restriction on the domain is not useful.

\section{Upper bound for $k\ge 3$}\label{sec:UB}
 \subsection{Main tool: theorem of Joos, Mubayi and Smith \cite{JMS}}
    \label{section_matching_setup}
We now describe the framework for conflict-free hypergraph matching that we will use. Several proofs for generalized Ramsey numbers which made use of conflict-free matchings involved a ``two-stage'' coloring process. The second stage was required to deal with the fact that the results from \cite{GJKKL} only produce an almost-perfect matching, so they only color almost all the edges. 
The general ``black box'' theorem which we use was proved by Joos, Mubayi and Smith \cite{JMS} to incorporate both stages into one setup.   The description of the setup provided here is taken verbatim from \cite{JMS}. 

    Given two hypergraphs $ \mathcal{H} $ and $ \mathcal{C} $, say that $ \mathcal{C} $ is a \textit{conflict hypergraph for $ \mathcal{H} $} if $ V(\mathcal{C}) = E(\mathcal{H}) $. We call the edges of $ \mathcal{C} $ \textit{conflicts}.
    In this section, suppose that we are given the following setup:

    \begin{itemize}
        \item integers $ \ell \ge 2, d > 0 $ and real $ \varepsilon > 0 $;
        \item disjoint sets $ P, Q, R $ with $ d^{\varepsilon} \le |P| \le |P \cup Q| \le \exp(d^{\varepsilon^3}) $;
        \item hypergraph $ \mathcal{H}_1 $ whose edges consist of $ p \ge 1 $ vertices from $ P $ and $ q \ge 0 $ vertices from $ Q $;
        \item hypergraph $ \mathcal{H}_2 $ whose edges consist of a single vertex from $ P $ and $ r \ge 1 $ vertices from~$ R $;
        \item conflict hypergraph $ \mathcal{C} $ for $ \mathcal{H}_1 $;
        \item conflict hypergraph $ \mathcal{D} $ for $ \mathcal{H} := \mathcal{H}_1 \cup \mathcal{H}_2 $.
    \end{itemize}

    We assume that $ \mathcal{H} $ satisfies suitable degree conditions, and further that both $ \mathcal{C} $ and $ \mathcal{D} $ satisfy suitable boundedness conditions, all of which are specified in \ref{sec:Hcond}--\ref{sec:Dcond} in terms of $ d $ and $ \varepsilon $.
 
	\begin{theorem}[\cite{JMS}]
		\label{thm:blackbox}
		For $ p + q = k \ge 2 $, there exists $ \varepsilon_0 > 0 $ such that for all $ \varepsilon \in (0, \varepsilon_0) $, there exists $ d_0 $ such that given the above setup (including all conditions in \ref{sec:Hcond}--\ref{sec:Dcond}), the following holds for all $d \ge d_0$:  there exists a  $ P $-perfect matching $ \mathcal{M} \subseteq \mathcal{H} $ which contains none of the conflicts from $ \mathcal{C} \cup \mathcal{D} $. Furthermore, at most $ d^{-\varepsilon^4} |P| $ vertices of $ P $ belong to an edge in $ \mathcal{H}_2 \cap \mathcal{M} $.
	\end{theorem}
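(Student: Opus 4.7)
My plan is to use the semi-random (R\"odl nibble) method for conflict-free matchings developed by Glock, Joos, Kim, K\"uhn and Lichev~\cite{GJKKL}, adapted to handle two interacting hypergraphs. The proof naturally splits into two stages corresponding to $\mathcal{H}_1$ and $\mathcal{H}_2$, with the key work being to ensure that a suitable pseudorandomness property survives stage one so that stage two can finish the matching without creating any conflicts in $\mathcal{D}$.

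\textbf{Stage 1.} First I would apply the conflict-free matching theorem of \cite{GJKKL} (or re-run its nibble argument directly) to $\mathcal{H}_1$ with conflict hypergraph $\mathcal{C}$, producing a partial matching $\mathcal{M}_1\subseteq\mathcal{H}_1$ that avoids every conflict in $\mathcal{C}$ and leaves only $d^{-\Omega(\varepsilon^2)}|P|$ vertices of $P$ uncovered. Beyond existence, one needs to track through the nibble a family of test functions via Freedman/Azuma martingale concentration so that $\mathcal{M}_1$ is \emph{pseudorandom} in a strong sense: for each small vertex set and each polylogarithmic-size collection of edges, the number of edges of $\mathcal{M}_1$ in that collection is close to its expectation. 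The degree conditions on $\mathcal{H}_1$ (presumably $\Delta(\mathcal{H}_1)\approx d$ with small codegrees) and the boundedness of $\mathcal{C}$ are exactly what is needed to make the nibble and these concentration estimates work.

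\textbf{Stage 2.} Let $P'\subseteq P$ be the small set of vertices uncovered by $\mathcal{M}_1$. Since $|P'|$ is small and every vertex of $R$ is still unused, each $v\in P'$ has many candidate $\mathcal{H}_2$-edges through it. I would then process the vertices of $P'$ one at a time, either greedily or via a short second random selection, at each step choosing an edge of $\mathcal{H}_2$ through $v$ whose $R$-vertices are fresh and which introduces no $\mathcal{D}$-conflict with the edges selected so far. The resulting matching $\mathcal{M}=\mathcal{M}_1\cup\mathcal{M}_2$ is $P$-perfect and conflict-free, and the number of $P$-vertices covered via $\mathcal{H}_2$ is at most $|P'|\le d^{-\varepsilon^4}|P|$, as required.

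\textbf{Main obstacle.} The principal difficulty is that conflicts in $\mathcal{D}$ can mix edges of $\mathcal{H}_1$ and $\mathcal{H}_2$, so stage two cannot be analysed in isolation from the structure of $\mathcal{M}_1$. To handle this, during stage one one must include, among the trackable statistics carried through the nibble, the number of ``near-complete'' partial $\mathcal{D}$-configurations: partial conflicts whose $\mathcal{H}_1$-edges all lie in $\mathcal{M}_1$ and which could be completed by one or two further $\mathcal{H}_2$-edges. The boundedness hypothesis on $\mathcal{D}$ should be precisely what forces the count of such near-complete configurations at each $v\in P'$ to be negligible compared to the $\mathcal{H}_2$-degree of $v$, so that at stage two almost every candidate edge through $v$ remains available and the greedy extension succeeds. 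Designing this family of trackable functions, verifying the one-step expected-change estimates through the nibble, and setting up the boundedness hierarchy so that the same $\varepsilon$-exponents propagate cleanly from stage one into stage two is where the bulk of the technical work lies.
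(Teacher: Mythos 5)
This theorem is not proved in the paper you were given: it is quoted verbatim (with its hypotheses in Sections~\ref{sec:Hcond}--\ref{sec:Dcond}) as a black box from Joos, Mubayi and Smith~\cite{JMS}, so there is no ``paper's proof'' to compare your attempt against. What the paper does say is that earlier generalized-Ramsey applications of conflict-free matchings used a two-stage process and that~\cite{JMS} ``incorporate[s] both stages into one setup,'' so your proposed architecture (a nibble on $\mathcal{H}_1$ against $\mathcal{C}$ in stage one, then a completion via $\mathcal{H}_2$ in stage two, with the $\mathcal{D}$-conflicts bridging the two) is consistent with how the authors of~\cite{JMS} describe their own contribution. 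That much is a fair reading of the hypotheses: $\mathcal{H}_1$ lives on $P\cup Q$ while $\mathcal{H}_2$ lives on $P\cup R$, so stage one leaves $R$ untouched, and the bound ``at most $d^{-\varepsilon^4}|P|$ vertices of $P$ covered by $\mathcal{H}_2$'' is precisely the statement that stage two only has to mop up the $d^{-\Omega(\varepsilon^2)}|P|$ leftover vertices from stage one.

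That said, what you have written is a plan, not a proof, and essentially all of the hard work in establishing Theorem~\ref{thm:blackbox} is pushed into phrases like ``track through the nibble a family of test functions,'' ``verifying the one-step expected-change estimates,'' and ``the boundedness hypothesis on $\mathcal{D}$ should be precisely what forces the count \dots to be negligible.'' In particular you do not specify which configuration counts must be tracked, how conditions (D2)--(D4) translate into one-step error bounds for the supermartingale, nor how to ensure that the vertices $P'$ left uncovered by stage one are not adversarially concentrated so that the greedy completion in stage two could get stuck (this is where (H3)--(H4) and the choice to absorb the leftover into $\mathcal{H}_2$ become delicate). These are not cosmetic omissions: in the conflict-free nibble literature the content of the theorem lives exactly in those concentration estimates. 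If your goal is an independent proof of Theorem~\ref{thm:blackbox}, you would need to either carry out the~\cite{GJKKL} nibble with an explicitly enumerated family of trackable statistics including the partial-$\mathcal{D}$ counts, or give a clean reduction to the statement of~\cite{GJKKL} plus a separate absorption lemma; as written, the proposal only gestures at both options.
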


	\subsubsection{Notation}
	
	Write $ [i, n] = \{ i, \ldots, n \} $, so $ [n] = [1, n] $.
	Unless otherwise stated we identify hypergraphs with their edge sets, writing $ e \in \mathcal{G} $ to mean $ e \in E(\mathcal{G}) $.
    Given a set of vertices $ U \subseteq V(\mathcal{G}) $ in a hypergraph $ \mathcal{G} $, write $ d_{\mathcal{G}}(U) $ for the degree of $ U $ in $ \mathcal{G} $, that is the number of edges of $ \mathcal{G} $ containing $ U $; in the cases $ U = \{ u \} $ and $ U = \{ u, v \} $, where $ U $ consists of one or two vertices, we just write $ d_{\mathcal{G}}(u) $ and $ d_{\mathcal{G}}(u, v) $ respectively.
	We omit the subscript if $ \mathcal{G} $ is obvious from context.
	Write $ \Delta_j(\mathcal{G}) $ for the maximum degree $ d_{\mathcal{G}}(U) $ among sets $ U \subseteq V(\mathcal{G}) $ of $ j $ vertices.
 
    Given a subset of the vertices $ V \subseteq V(\mathcal{G}) $, write $\Delta_V(\mathcal{G})$ for the maximum degree $ d_{\mathcal{G}}(u) $ of any single vertex $ u \in V $, and similarly $ \delta_V(\mathcal{G}) $ for the minimum degree; assume $ V = V(\mathcal{G}) $ if not specified.
	Given $ j \in \mathbb{N} $, write $ \mathcal{G}^{(j)} := \{ E \in \mathcal{G} : |E| = j \} $ for the subhypergraph of $ \mathcal{G} $ containing only those edges of size $ j $.

	
	\subsubsection{Degree conditions on $ \mathcal{H} $}\label{sec:Hcond}
	
	We require that the hypergraph $ \mathcal{H} = \cH_1 \cup \cH_2 $ with $\cH_1,\cH_2\neq \emptyset$ satisfies the following conditions.

	\begin{enumerate}[(H1)]
		\item $ (1 - d^{-\varepsilon})d \le \delta_P(\mathcal{H}_1) \le \Delta(\mathcal{H}_1) \le d $; \label{cond_h_h1degree}
		\item $ \Delta_2(\mathcal{H}_1) \le d^{1 - \varepsilon} $; \label{cond_h_h1codegree}
		\item $\Delta_{R}(\mathcal{H}_2) \le d^{\varepsilon^4} \delta_{P}(\mathcal{H}_2) $; \label{cond_h_h2degree} 
		\item $ d(x, v) \le d^{-\varepsilon} \delta_{P}(\mathcal{H}_2) $ for each $ x \in P $ and $ v \in R $. \label{cond_h_h2codegree}
	\end{enumerate}
	
	This means that $ \mathcal{H}_1 $ is essentially regular for vertices in $ P $ and has small codegrees, although vertices in $ Q $ are allowed to have much lower (but not higher) degrees. Meanwhile in $ \mathcal{H}_2 $, every vertex in $ P $ must have degree at least a $ d^{-\varepsilon^4} $ proportion of the maximum degree in $ R $, and few edges in common with any particular vertex in $ R $. 
	
	\subsubsection{Boundedness conditions on $ \mathcal{C} $}\label{sec:Ccond}

	The conflicts of $ \mathcal{C} $ consist only of edges from $ \mathcal{H}_1$. We require that $\mc{C}$ satisfies the following conditions.
	\begin{enumerate}[(C1)]
		\item $ 3 \le |C| \le \ell $ for all $ C \in \mathcal{C} $; \label{cond_c1}
		\item $ \Delta(\mathcal{C}^{(j)}) \le \ell d^{j - 1} $ for all $ j \in [3, \ell] $; \label{cond_c2}
		\item $ \Delta_{j'}(\mathcal{C}^{(j)}) \le d^{j - j' - \varepsilon} $ for all $ j \in [3, \ell] $ and $ j' \in [2, j - 1] $. \label{cond_c3}
	\end{enumerate}
	
	\subsubsection{Boundedness conditions on $ \mathcal{D} $}\label{sec:Dcond}
	
	The conflicts of $ \mathcal{D} $ may consist only of edges from $ \mathcal{H}_2 $, or of two parts from $ \mathcal{H}_1 $ and $ \mathcal{H}_2 $, and so must satisfy a new set of conditions to be avoided.
    Write $ \mathcal{D}^{(j_1, j_2)} $ for the set of conflicts in $ \mathcal{D} $ consisting of $ j_1 $ edges from $ \mathcal{H}_1 $ and $ j_2 $ edges from $ \mathcal{H}_2 $.
	Similarly, 
    write $ \Delta_{j'_1, j'_2}(\mathcal{D}) $ for the maximum degree among sets $ F = F_1 \cup F_2 \subseteq \cH$ consisting of $ |F_1| = j'_1 $ edges from $ \mathcal{H}_1 $ and $ |F_2| = j'_2 $ edges from $ \mathcal{H}_2 $.
	Given a vertex $ x \in P $, write also $ \mathcal{D}_x $ for the set of conflicts in $ \mathcal{D} $ containing $ x $ in their $ \mathcal{H}_2 $-part, and likewise $ \mathcal{D}_{x, y} $ for those containing both $ x $ and $ y $.
	We require that $\mc{D}$ satisfies the following conditions for all $ x, y \in P $, and $ j_1 \in [0, \ell], j_2 \in [2, \ell] $.
	\begin{enumerate}[(D1)]
		\item $ 2 \le |D \cap \mathcal{H}_2| \le |D| \le \ell $ for each conflict $ D \in \mathcal{D} $; \label{cond_mixed_conflictsize}
		\item $ |\mathcal{D}^{(j_1, j_2)}_x| \le d^{j_1 + \varepsilon^4} \delta_{P}(\mathcal{H}_2)^{j_2} $; \label{cond_mixed_degree}
		\item $ \Delta_{j', 0}(\mathcal{D}^{(j_1, j_2)}_x) \le d^{j_1 - j' - \varepsilon} \delta_{P}(\mathcal{H}_2)^{j_2} $ for each $ j' \in [j_1] $; \label{cond_mixed_codegree1}
		\item $ |\mathcal{D}^{(j_1, j_2)}_{x, y}| \le d^{j_1 - \varepsilon} \delta_{P}(\mathcal{H}_2)^{j_2} $. \label{cond_mixed_codegree2}
	\end{enumerate}

\subsection{Definitions for our $\mc{H}_1, \mc{C}$, $\mc{H}_2$ and $\mc{D}$}
Our goal in this section is to define appropriate  $\mc{H}_1, \mc{C}$, $\mc{H}_2$ and $\mc{D}$ (as described above) such that a $P$-perfect matching in $\mc{H} = \mc{H}_1\cup\mc{H}_2$ which contains none of the conflicts from $\mc{C}\cup\mc{D}$   will correspond to a $(C_{2k},3)$-coloring of $K_{n,n}$. Recall again that throughout, we assume that $A\cup B$ is the bipartition of $K_{n,n}$

We start by obtaining a linear $(k-1)$-uniform hypergraph $\mc{S}_A$ on vertex set $A$ with some desirable properties. 
We say that a sequence $(S_1, S_2, \ldots, S_\ell)$ of (not necessarily distinct) edges  in $\mc{S}_A$ is a {\em $uv$-edge {\walk} of length $\ell$} if $u \in S_1, v \in S_\ell$, and $S_{i-1} \cap S_i \neq \emptyset$ for $i=2, \ldots, \ell$. 

Our proof will use some constant $\delta > 0$, and the conditions of Theorem \ref{thm:blackbox} will be satisfied for some $\eps >0$. These constants can be chosen so that 
\begin{equation}\label{eqn:constants}
    0<\delta \ll \eps \ll \frac 1k,
\end{equation}
where by $a \ll b$ we mean that there exists an increasing function $f$ such that our argument is valid if we choose $a < f(b)$.
\begin{lemma}\label{lem:S-exists}
     There exists a linear $(k-1)$-uniform hypergraph $\mc{S}_A$ on vertex set $A$ with the following properties:
    \begin{enumerate}[(S1)]
\item $n^{2-\delta} - n^{2-2\delta}\le |\mc{S}_A|\le n^{2-\delta} + n^{2-2\delta}$,
\item $(k-1)n^{1-\delta} - n^{1-2\delta}\le d_{\mc{S}_A}(v) \le (k-1)n^{1-\delta} + n^{1-2\delta}$ for all $v\in A$,

   \item  for any $a \in A$ and $\ell \in [1, 2k]$ there are $O(n^{\ell(1-\delta)})$ edge {\walks} of length $\ell$ having $a$ as an endpoint, and
    \item for any  $a, a' \in A$, and $ \ell \in [2, 2k]$, there are $O(n^{\ell(1-\delta)-1 })$ $aa'$-edge {\walks} of length $\ell$ that do not use any edge containing both $a, a'$.
\end{enumerate}
\end{lemma}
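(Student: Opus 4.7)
The plan is to construct $\mc{S}_A$ via a random sampling argument. Let $p := c\, n^{3-k-\delta}$ for a suitable constant $c = c_k > 0$, and form an auxiliary $(k-1)$-uniform hypergraph $\mc{S}^0$ on $A$ by including each $(k-1)$-subset independently with probability $p$. Then $\E|\mc{S}^0| = p\binom{n}{k-1} \sim n^{2-\delta}$ and $\E\, d_{\mc{S}^0}(v) \sim (k-1)n^{1-\delta}$ for each $v \in A$. To enforce linearity, let $\mc{S}_A$ be obtained from $\mc{S}^0$ by deleting one edge from each pair of edges sharing at least two vertices. A direct count gives that the expected number of such intersecting pairs is $O(n^{2k-4}p^2) = O(n^{2-2\delta})$, and the expected reduction at any fixed vertex is $O(n^{1-2\delta})$, so after tuning $c$ the deletion step preserves (S1) and (S2) within the stated tolerances.

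For (S1) and (S2) themselves, the edge count and each individual vertex degree in $\mc{S}^0$ are sums of independent indicators, so Chernoff gives concentration around the mean up to $O(\sqrt{\E X}\log n)$, which is much smaller than the allowed $n^{2-2\delta}$ and $n^{1-2\delta}$; a union bound over the $n$ vertices handles (S2). For (S3), fix $a \in A$ and count sequences $(S_1,\ldots,S_\ell)$ of $(k-1)$-subsets with $a \in S_1$ and $S_{i-1}\cap S_i \neq \emptyset$: there are $O(n^{k-2})$ choices for $S_1$ and, given $S_{i-1}$, at most $(k-1)\binom{n-1}{k-2} = O(n^{k-2})$ ways to pick $S_i$ (choose the shared vertex, then the remaining $k-2$ vertices). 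This gives $O(n^{\ell(k-2)})$ candidate walks, each appearing in $\mc{S}^0$ with probability at most $p^\ell = O(n^{\ell(3-k-\delta)})$, so the expected walk count is $O(n^{\ell(1-\delta)})$. For (S4), the endpoint constraint $a' \in S_\ell$ combined with $S_{\ell-1}\cap S_\ell \neq \emptyset$ forces an additional fixed vertex in $S_\ell$, saving one factor of $n$ and giving expected count $O(n^{\ell(1-\delta)-1})$; the ``no edge contains both $a,a'$'' restriction only further reduces the count.

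The main technical hurdle is to upgrade these expectations in (S3)-(S4) so that the $O$-bounds hold uniformly over all $a$ and all pairs $(a,a')$. Since each walk count is a polynomial of bounded degree $\ell \le 2k$ in the independent $\{0,1\}$ edge indicators, this concentration can be handled either by Kim-Vu polynomial concentration or by an explicit $t$-th moment calculation with $t$ a large constant, yielding tail bounds strong enough to survive a union bound over all $n^2$ choices of $(a,a')$. A small ``exceptional'' set of vertices or pairs for which the bound still fails is dealt with by deleting all $\mc{S}^0$-edges incident to them; since the exceptional set has size at most $n^{1-\Omega(1)}$, the sizes and degrees in (S1)-(S2) are unaffected within the allowed tolerances, and the walk bounds in (S3)-(S4) are preserved under edge deletion by monotonicity. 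The linearity-removal step itself cannot inflate walk counts since it only removes edges, so once the bounds hold in $\mc{S}^0$ minus the exceptional set they transfer to $\mc{S}_A$.
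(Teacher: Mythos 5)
Your construction (binomial random $(k-1)$-uniform hypergraph, then deleting edges to enforce linearity) differs from the paper, which starts from an $(n,k-1,2)$-Steiner system (so linearity is automatic) and keeps each edge independently with probability $(k-1)(k-2)/n^{\delta}$. The more significant difference is in how the walk bounds (S3)--(S4) are obtained, and here your plan has a real gap.

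The paper makes two observations you miss. First, once $\mc{S}_A$ is linear and (S2) holds, (S3) is \emph{deterministic}: from a fixed edge you have at most $k-1$ choices of pivot vertex and then $O(n^{1-\delta})$ choices of next edge, so there are $O(n^{\ell(1-\delta)})$ walks of length $\ell$ from any $a$; no concentration is needed at all. Second, (S4) for $\ell\ge 3$ reduces to the case $\ell=2$ by splitting a walk at its second edge and applying (S3) with $\ell-2$, so the only statement that actually needs a probabilistic concentration argument is (S4) with $\ell=2$. For that the paper uses McDiarmid: the walk count depends only on the $O(n)$ independent indicators for edges of the Steiner system through $a$ or $a'$, and because the Steiner system has codegree one, toggling one such indicator changes the count by $O(1)$. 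The resulting tail bound $\exp(-\Omega(n^{1-4\delta}))$ easily survives a union bound over the $O(n^2)$ pairs $(a,a')$.

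By contrast you propose proving (S3) and (S4) for all $\ell$ directly by concentration of a degree-$\ell$ polynomial in the edge indicators, waving at Kim--Vu or a $t$-th moment calculation. This is not a small detail to defer: those tools give tails of the form $\exp(-c\lambda^{1/\ell})$, the parameters depend on conditional expectations that you have not estimated, and it is not obvious the bound survives a union bound over $n^2$ pairs; the fallback of declaring an ``exceptional set'' of failing pairs and deleting incident edges is itself problematic, since a failing \emph{pair} gives you no canonical vertex to delete, and you have not bounded how many pairs could fail. There is also a minor imprecision in the expectation calculation: a walk with repeated edges appears with probability $p^{\ell'}$ where $\ell'<\ell$ is the number of \emph{distinct} edges, not $p^\ell$; one needs to argue separately (stratifying by $\ell'$, where the candidate count also drops) that the contribution is still $O(n^{\ell(1-\delta)})$. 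The Steiner-system starting point would remove the linearity-deletion step from your argument entirely, and noticing that (S3) is deterministic and (S4) reduces to $\ell=2$ would reduce the concentration burden to a single clean McDiarmid application. As written, the concentration step is an unfilled gap rather than a routine detail.
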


We will use the following concentration inequality due to McDiarmid~\cite{mcdiarmid} in our proof. 
\begin{theorem}[McDiarmid's inequality]\label{thm:McDiarmid}
Suppose $X_1,\ldots,X_m$ are independent random variables. Suppose $X$ is a real-valued random variable determined by $X_1,\ldots, X_m$ such that changing the outcome of $X_i$ changes $X$ by at most $b_i$ for all $i\in [m]$. Then, for all $t\geq 0$, we have \[\prob[|X-\E[X]|\geq t]\leq 2\exp\left(-\frac{2t^2}{\sum_{i\in [m]}b_i^2}\right).\]
\end{theorem}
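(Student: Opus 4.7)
The plan is to prove this concentration inequality via the standard Doob-martingale plus Azuma-Hoeffding route, carried out in three steps and resting on a Hoeffding-style moment-generating-function bound.

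First, introduce the Doob martingale adapted to the natural filtration of $(X_1, \ldots, X_m)$: set $Z_0 := \E[X]$ and $Z_i := \E[X \mid X_1, \ldots, X_i]$ for $i \in [m]$, so that $Z_m = X$ by measurability and the increments $D_i := Z_i - Z_{i-1}$ form a martingale difference sequence with $X - \E[X] = \sum_{i=1}^m D_i$.

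Second, show that conditional on $X_1, \ldots, X_{i-1}$, the random variable $D_i$ has mean zero and takes values in an interval of length at most $b_i$. Writing $X = f(X_1, \ldots, X_m)$ as a deterministic function and using independence of the $X_j$, one has $Z_i = g_i(X_1, \ldots, X_i)$, where
\[g_i(x_1, \ldots, x_i) := \E[f(x_1, \ldots, x_i, X_{i+1}, \ldots, X_m)].\]
The bounded-differences hypothesis on $f$ transfers to $g_i$ in its $i$-th argument, so for any two values $x, x'$ of that coordinate, $|g_i(X_1, \ldots, X_{i-1}, x) - g_i(X_1, \ldots, X_{i-1}, x')| \le b_i$. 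This bounds the conditional range of $Z_i$, and hence of $D_i$, by $b_i$.

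Third, apply Hoeffding's lemma conditionally to obtain $\E[e^{\lambda D_i} \mid X_1, \ldots, X_{i-1}] \le \exp(\lambda^2 b_i^2 / 8)$ for every $\lambda > 0$. Iterating via the tower property yields $\E[e^{\lambda(X - \E[X])}] \le \exp\bigl(\lambda^2 \sum_{i=1}^m b_i^2 / 8\bigr)$. A Chernoff bound gives $\prob[X - \E[X] \ge t] \le \exp(-\lambda t + \lambda^2 \sum_i b_i^2 / 8)$; minimizing at $\lambda = 4t / \sum_i b_i^2$ yields $\prob[X - \E[X] \ge t] \le \exp(-2t^2 / \sum_i b_i^2)$. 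Running the same argument on $-(X - \E[X])$ and applying a union bound yields the two-sided inequality with leading factor $2$.

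The only non-routine ingredient, and hence the main technical obstacle, is Hoeffding's lemma itself: a zero-mean $Y \in [a, b]$ satisfies $\E[e^{\lambda Y}] \le \exp(\lambda^2 (b-a)^2 / 8)$. The cleanest argument shows that the cumulant function $\psi(\lambda) := \log \E[e^{\lambda Y}]$ satisfies $\psi(0) = \psi'(0) = 0$ and $\psi''(\lambda) \le (b-a)^2/4$ (the latter being a variance bound on the exponentially tilted distribution, which remains supported in $[a,b]$), and then integrates twice. This is classical and the remainder of the proof is routine MGF chaining.
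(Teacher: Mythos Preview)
Your proof is correct and is the standard Doob-martingale/Azuma--Hoeffding derivation of McDiarmid's inequality. Note, however, that the paper does not prove this theorem at all: it is stated as a cited tool from McDiarmid~\cite{mcdiarmid} and used as a black box in the proof of Lemma~\ref{lem:S-exists}, so there is no in-paper proof to compare your argument against.
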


\begin{proof}[Proof of Lemma \ref{lem:S-exists}]
   We start with a ``complete'' $(k-1)$-uniform linear hypergraph $K$ with $\frac{\binom n2}{\binom{k-1}{2}}$ edges on vertex set $A$ (these are known as $(n,k-1,2)$-Steiner systems and  are known to exist from the work of Wilson in the 70's \cite{Wil1, Wil2, Wil3}). Of course, there are some divisibility conditions on $n$ in order for such a hypergraph $K$ to exist, but we can easily take care of this by rounding $n$ up to satisfy the divisibility conditions. We then let $\mc{S}_A$ be a random subgraph of $K$  where each edge is retained with probability $(k-1)(k-2)/n^{\delta}$. Note that the expected degree of a vertex $v$ in $\mc{S}_A$ is $(k-1)n^{1-\delta}$. 

We will explicitly justify (S4) using McDiarmid's inequality. Properties (S1) and (S2) are similar but easier. Property (S3) follows directly from (S2). 

To justify (S4), we first fix $a, a' \in A$ and let $\ell=2$. For our application of McDiarmid's inequality, our independent variables $X_i$ will be indicators, one for the presence of each edge in $K$. $X$ will be the number of $aa'$-edge {\walks} of length 2 in $\mc{S}_A$.  Note that in $K$ there are precisely $n-2$ $aa'$-edge {\walks} of length 2 that do not use the edge containing both $a, a'$. Thus $\E[X]=\Theta(n^{1-2\delta})$. For our values $b_i$, the only nonzero values correspond to edges containing $a$ or $a'$, there are $O(n)$ such edges, and the corresponding values $b_i$ are $O(1)$. Thus we have
\[\prob[X \ge 2\E[X]] \le \prob[|X-\E[X]|\geq \E[X]]\leq 2\exp\left(-\frac{\Theta(n^{2-4\delta})}{O(n)}\right)= \exp\left( -\Omega(n^{1-4\delta})\right).\]
We can choose $\delta< 1/4$, and so by the union bound over $O(n^2)$ choices for $a, a'$ we have that (S4) holds for $\ell=2$ with probability tending to 1 as $n \rightarrow \infty$. Now for $\ell \ge 3$ we can use (S3) with $\ell-2$ in place of $\ell$ and then we use (S4) in the $\ell=2$ case. Thus we obtain our hypergraph $\mc{S}_A$ with properties (S1)--(S4). 
\end{proof}

Let $E_A:= \binom A2 \setminus \bigcup_{
S \in \mc{S}_A} \binom S2 = \{aa': a, a' \in A \mbox{ and } \{a, a'\} \not\subseteq S \; \forall \; S \in \mc{S}_A\}$ be the set of pairs in $A$ which are not contained in any hyperedge of $\mc{S}_A$. Analogously we obtain a hypergraph $\mc{S}_B$ on vertex set $B$, and the set of pairs $E_B$ not contained in any element of $\mc{S}_B$. 

\subsubsection{Definition of $\mc{H}_1$}
For our application of Theorem \ref{thm:blackbox}, we will let $\mc{H}_1$ be the following hypergraph. The vertex set will be $V(\mc{H}_1)= P \cup Q$ where
\begin{equation*}
    P:=E(K_{n, n}), \qquad Q:= \left\{ v_c: v \in V(K_{n, n}), c \in W \right\} \cup E_A \cup E_B.
\end{equation*}
We now describe the edges of $\mc{H}_1$. Each such edge will correspond to colored copy of $K = K_{k-1, 2k-1}$ in $K_{n, n}$. Letting $Y$ be the part of the bipartition of $K$ with $k-1$ vertices and $Z$ be the other part, we require that $Y \in \mc{S}_A \cup \mc{S}_B$ and $\binom Z2 \subseteq E_A \cup E_B$. For the rest of this section, $K$ will always represent a copy of $K_{k-1, 2k-1}$, and when we say that $K$ has bipartition $Y \cup Z$ we assume $|Y|=k-1, |Z|=2k-1$. For each such $K$ and color $c\in C$, $\mc{H}_1$ will have an edge 
\begin{equation*}
    e_{K, c} = E(K) \cup \{v_c: v \in V(K)\} \cup \binom Z2.
\end{equation*}

\subsubsection{Definition of $\mc{C}$}
Next we describe our conflict system $\mc{C}$. First we define a conflict system $\mc{C}'$. Consider a set of distinct bicliques $K_1, \ldots, K_j$ isomorphic to $K_{k-1, 2k-1}$, where $4 \le j\le 2k$ is even. Assume that each of these bicliques has its partite set of size $k-1$ in $\mc{S}_A \cup \mc{S}_B$ and that $V(K_i) \cap V(K_{i+1}) \neq \emptyset$ for $i=1\ldots j$ (subscripts taken modulo $j$). Now for any two distinct colors $c, c'$ the conflict system $\mc{C}'$  will have a conflict $C=\{e_{K_1, c}, e_{K_2, c'}, \ldots, e_{K_j, c'}\}$ (where the colors $c, c'$ alternate). If $C$ is not a matching in $\mc{H}$, of course we do not include it in $\mc{C}'$. We now define $\mc{C}$ to be the set of minimal conflicts in $\mc{C}'$. Note that any $\mc{C}$-free matching is also $\mc{C}'$-free. 

\begin{claim}\label{clm:legit}
    A $\mc{C}$-free matching in $\mc{H}_1$ corresponds to a partial coloring of $E(K_{n, n})$ with no 2-colored $C_{2k}$. 
\end{claim}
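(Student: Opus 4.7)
The plan is to verify three things for any $\mc{C}$-free matching $\mc{M} \subseteq \mc{H}_1$: (1) the resulting coloring is well-defined (no $K_{n,n}$-edge gets two colors); (2) any two distinct matched hyperedges $e_{K, c}, e_{K', c'} \in \mc{M}$ share no $K_{n,n}$-vertex if $c = c'$ and at most one $K_{n,n}$-vertex if $c \ne c'$; and (3) the existence of a 2-colored $C_{2k}$ in the coloring forces either a violation of (2) or the appearance of a conflict from $\mc{C}'$ (and hence, by minimality, from $\mc{C}$).

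Item (1) is immediate: $E(K_{n,n}) = P$ lies in $V(\mc{H}_1)$, and each hyperedge $e_{K, c}$ contains $E(K) \subseteq P$, so a matching assigns each $K_{n,n}$-edge at most one color. For item (2), the case $c = c'$ is handled by the vertices $v_c \in Q$: a shared $K_{n,n}$-vertex $v$ yields a common $Q$-vertex $v_c$. Assume $c \ne c'$ and take distinct $u, v \in V(K) \cap V(K')$. The argument is a case analysis on how $u, v$ are distributed among $Y, Z$ and $Y', Z'$, exploiting: (i) $\binom{Z}{2}, \binom{Z'}{2}$ are part of $e_{K, c}, e_{K', c'}$'s $Q$-sides (and lie in $E_A \cup E_B$ by construction); (ii) $\mc{S}_A, \mc{S}_B$ are linear, so two small sides from the same one of $\mc{S}_A, \mc{S}_B$ intersect in at most one vertex, and sides from different ones are disjoint; and (iii) $E_A, E_B$ are exactly the pairs not contained in any $\mc{S}$-hyperedge. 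After enumerating the four possible side-assignments of $K, K'$ relative to $A \cup B$, every configuration with two shared vertices collapses to one of: (a) $\{u, v\} \in E(K) \cap E(K')$, a $P$-collision; (b) $\{u, v\}$ is contained in $Z \cap Z'$ (or the analogous same-side intersection), giving a shared $Q$-vertex in $E_A \cup E_B$; or (c) $\{u, v\}$ is forced to lie simultaneously inside an $\mc{S}$-hyperedge and inside $E_A \cup E_B$, contradicting the definition of the latter.

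For item (3), suppose the coloring contains a $C_{2k}$ using only colors $c, c'$. Decompose it cyclically into maximal monochromatic arcs $S_1, \ldots, S_j$; then $j$ is even and $j \ge 2$, since a monochromatic $C_{2k}$ cannot fit inside a single $K_{k-1, 2k-1}$ (whose smaller part has only $k-1$ vertices). By item (2), any two matched bicliques of the same color are vertex-disjoint, so each $S_i$ lies in a uniquely determined biclique $K_{\pi(i)}$, and consecutive $K_{\pi(i)}, K_{\pi(i+1)}$ share the breakpoint between $S_i$ and $S_{i+1}$. If $j = 2$, then $K_{\pi(1)}, K_{\pi(2)}$ share both endpoints of $S_1$, contradicting item (2). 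If $j \ge 4$ but the sequence contains some $K_{\pi(i)} = K_{\pi(i+2)}$, then $K_{\pi(i+1)}$ shares two breakpoints with that biclique, again contradicting item (2). Otherwise, let $d$ be the minimum positive integer for which $K_{\pi(i)} = K_{\pi(i+d)}$ for some $i$ (set $d = j$ if no such repeat exists); then $d \ge 4$ is even, and the sub-sequence $K_{\pi(i)}, K_{\pi(i+1)}, \ldots, K_{\pi(i+d-1)}$ consists of distinct bicliques (by minimality of $d$) with consecutive elements sharing a vertex and colors alternating. This is a conflict in $\mc{C}'$, contradicting $\mc{C}$-freeness.

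The hard part is item (2): carefully running through the four side-assignments of $K, K'$ relative to the bipartition $A \cup B$, and using both the linearity of $\mc{S}_A, \mc{S}_B$ and the complementary nature of $E_A, E_B$ to rule out every configuration with two shared vertices.
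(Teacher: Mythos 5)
Your item (2) is false, and this breaks the rest of the argument. You claim that any two matched hyperedges $e_{K,c}, e_{K',c'}$ with $c \ne c'$ share at most one $K_{n,n}$-vertex. But the paper's entire improvement hinges on the opposite: two bicliques with different colors \emph{can} share all $k-1$ vertices of their small sides while still forming a matching in $\mc{H}_1$, provided $Y = Y'$ and $Z \cap Z' = \emptyset$. In that case $\{u,v\} \subseteq Y$ is not a vertex of $\mc{H}_1$ lying in either hyperedge at all: it is not in $E(K)$ or $E(K')$ (both endpoints are on the same side), it is not of the form $v_c$, and it is not in $\binom{Z}{2} \cup \binom{Z'}{2}$ nor in $E_A \cup E_B$ (being contained in the $\mc{S}$-hyperedge $Y$ excludes it from $E_A$/$E_B$ by definition). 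Your case analysis misses this because you invoke linearity of $\mc{S}_A, \mc{S}_B$ to say ``two small sides intersect in at most one vertex,'' but linearity only constrains two \emph{distinct} hyperedges; nothing in the construction prevents $K$ and $K'$ from reusing the very same hyperedge as their $Y$-side. (Indeed, this reuse is exactly what the construction is designed to exploit.)

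With item (2) gone, item (3) breaks at two points. The $j=2$ case and the $K_{\pi(i)} = K_{\pi(i+2)}$ case are not impossible on the grounds you cite, because two consecutive bicliques of different colors can legitimately share two breakpoints when they share a $Y$-side. These configurations do need to be excluded, but for a different, more delicate reason: a collection of bicliques all sharing the same $Y$-side (with $|Y|=k-1$) has only $k-1$ vertices on that side of the bipartition and therefore cannot contain a $C_{2k}$, which requires $k$ vertices on each side. The paper handles this via a contraction argument on ``annoyances'' (maximal subsequences of the arc-sequence that alternate between two bicliques), together with the observation that the original sequence must contain at least two distinct bicliques of each color (i.e., at least two distinct $Y$-sets), so that after all contractions the sequence still has length at least four and yields a genuine conflict. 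Your ``take the minimum period $d$'' trick is a reasonable alternative to the contraction machinery once repetitions at distance $2$ are ruled out, but ruling them out is precisely the part that requires the $Y$-sharing observation you've overlooked. To repair the proof you would need to (a) replace item (2) with the correct statement --- two matched bicliques of different colors sharing $\ge 2$ vertices must have $Y = Y'$ --- and (b) use the $k-1$ vs.\ $k$ counting argument to show the degenerate cases do not actually produce a $C_{2k}$, before applying your minimum-period argument.
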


\begin{proof}
    
Suppose there is some 2-colored $C_{2k}$. This cycle can be broken into monochromatic paths, each of which must come from some $e_{K, c}$. In particular we have some sequence $\sigma=(e_{K_1, c}, e_{K_2, c'}, \ldots, e_{K_L, c'})$ alternating in the colors $c, c'$ such that any two consecutive $K_{i}, K_{i+1}$ share a vertex (in the {\em cyclic ordering}, i.e.~ where we consider the first and last to be consecutive, or equivalently let $K_{L+1}:=K_1$). To show that we have a conflict, we will ``clean'' our sequence $\sigma$ so that the bicliques are all distinct, and this cleaning will maintain the property that consecutive bicliques intersect. We define an {\em annoyance} to be a maximal subsequence of consecutive elements from $\sigma$ which just alternates between two bicliques and starts and end with the same bicliques, i.e., is of odd length. To {\em contract} an annoyance is an operation on $\sigma$ where we will skip from the beginning to the end of the annoyance to produce a shorter sequence $\sigma'$. For example, if  $K_3 = K_5 = K_7 $ and $K_2 \neq K_4 = K_6 \neq K_8$ then we would have an annoyance $(e_{K_3, c}, e_{K_4, c'}, \ldots,  e_{K_7, c})$. If say $L=8$ and we contract this annoyance, we arrive at the new sequence $\sigma' = (e_{K_1, c}, e_{K_2, c'}, e_{K_3, c}, e_{K_8, c'})$. Note that in $\sigma'$, consecutive bicliques still intersect (in our example, $K_8$ intersects $K_7 = K_3$). We observe the following. In the original sequence $\sigma=(e_{K_1, c}, \ldots, e_{K_L, c'})$, there must be at least two distinct bicliques of color $c$ (and two of color $c'$). Indeed, if say $K_1$ was the only biclique of color $c$ here, then every $c'$-colored biclique $K_2, \ldots, K_L$ must share at least two vertices with $K_1$, and so the small sides $Y_1 = Y_2 =\ldots = Y_L$ all coincide. The union of these bicliques cannot contain a $C_{2k}$.
Therefore $\sigma$ has at least four bicliques, and therefore there are two bicliques (say pick any two of the same color) with disjoint $Y$-sets. Also, we observe that the contraction operation does not affect the set of distinct $Y$-sets used by bicliques in the sequence. Indeed, the two bicliques in an annoyance share a $Y$-set and one of them remains after contraction.
Now contract all of the annoyances to get say $\sigma''=(e_{K_1', c}, e_{K_2', c'}, \ldots, e_{K_{L'}, c'})$, where we have renamed some of the bicliques $K_1, \ldots K_L$ and possibly reindexed so that the color alternation starts with $c$. It is still possible that we have some repetition of bicliques in $\sigma''$ but we cannot have $K_i' = K_{i+2}'$ (since $\sigma''$ does not have any annoyance). Take a maximal sequence of the form $(K_{1}', \ldots, K_{i}')$ of distinct bicliques (where maximality means that either $i=L'$ since all the bicliques in $\sigma''$ are distinct,  or else our sequence cannot be extended because $K_{i+1}' = K_{i'}'$ for some $i' \le i$. In the first case we are done, since the set of colored bicliques in $\sigma''$ is a conflict. Note that the length of $\sigma''$ is at least $4$ since there are two disjoint $Y$-sets in the bicliques from $\sigma''$. In the second case we have $K_{i+1}' = K_{i'}'$ and we have the conflict using the bicliques $K_{i'}', K_{i'+1}', \ldots K_i'$ (with appropriate alternating colors $c, c'$). In this last case note that our conflict does have at least four colored bicliques since we have contracted all annoyances.
\end{proof}

\subsubsection{Definition of $\mc{H}_2$}
Now we describe $\mc{H}_2$.  $R$ is a set of pairs $(e, c)$. We let $R$ be a set of 
$n^2\cdot n^{1-\delta}$ pairs $(e,c)$ where $e\in E(K_{n,n})$ and each $c$ represents a color from a new set, $W'$ of $n^{1-\delta}$ colors (disjoint from $W$).
The vertex $e\in P$ is adjacent to all pairs $(e,c)$ in $R$. So $\mc{H}_2$, the bipartite graph on $P\cup R$, is a disjoint union of stars.

\subsubsection{Definition of $\mc{D}$}

Finally, we describe the conflict system $\mc{D}$. First we define a conflict system $\mc{D}'$. Consider a set $D$ consisting of some edges from $\mc{H}_2$ and possibly also some edges from $\mc{H}_1$. As above, each edge from $\mc{H}_1$ corresponds to a colored copy of $K_{k-1, 2k-1}$. Each edge of $\mc{H}_2$ corresponds to a colored edge of $K_{n, n}$ in the obvious way. If $D$ is a matching in $\mc{H}_1\cup\mc{H}_2$ and the colored edges in $K_{n, n}$ corresponding to $D$ would contain a 2-colored cycle of length at most $2k$, then $D \in \mc{D}'$. Now we let $\mc{D}$ be the set of minimal conflicts in $\mc{D}'$. $\mc{D}$ includes some conflicts that are entirely in $\mc{H}_2$, which correspond to cycles of length at most $2k$ colored with at most 2 colors.  If a conflict in $\mc{D}$ does have any edges in $\mc{H}_1$, then this conflict corresponds to some set of bicliques $K_1, \ldots, K_{j_1}$ of the same color say $c\in W$, together with $j_1$ vertex disjoint paths, all monochromatic in some other color $c'\in W'$, with one such path connecting each pair of consecutive bicliques (in the cyclic ordering). Given Claim \ref{clm:legit}, it is easy to see that any $\mc{C} \cup \mc{D}$-free $P$-perfect matching in $\mc{H}$ corresponds to a $(C_{2k}, 3)$-coloring of $K_{n, n}$.

We now embark on checking the conditions for Theorem \ref{thm:blackbox}.

\subsection{$\mc{H}$ conditions}
Here we check the conditions from Section \ref{sec:Hcond}. We will let 
\begin{equation}\label{eqn:ddef}
    d = \frac{3k-2}{(2k-1)!} n^{2k-\delta} + \kappa n^{2k-2\delta}
\end{equation}
for some large constant $\kappa$.

\begin{enumerate}[(H1)]
		\item Let $e \in E(K_{n, n})$. Then
       \[
       d_{\mc{H}_1}(e) = 2 \rbrac{(k-1)n^{1-\delta} + O\rbrac{n^{1-2\delta}}} \sbrac{\binom{n}{2k-2} + O\rbrac{n^{2k-2-\delta}}} |W| = \frac{3k-2}{(2k-1)!} n^{2k-\delta} + O\rbrac{n^{2k-2\delta}}
       \]
    {\bf Explanation:} Given $e=ab \in K_{n, n}$ where $a \in A,  b \in B$, we need to choose an appropriate colored copy $K$ of $K_{k-1, 2k-1}$. We first choose one of two possible orientations (i.e. whether $K$ has $k-1$ vertices in $A$ or in $B$). Let us assume that $K$ has $k-1$ vertices in $A$ (the other case being similar). Then we choose a hyperedge of $\mc{S}_A$ containing $a$. Then we choose a set $Z'$ of $2k-2$ vertices from $B$ such that $\binom{Z' \cup \{b\}}{2} \subseteq E_B$. \\
    Now consider a vertex $v_c$ where $v \in V(K_{n, n}), c \in W$. Then
    \begin{align*}
           d_{\mc{H}_1}(v_c) &=\rbrac{(k-1)n^{1-\delta} + O\rbrac{n^{1-2\delta}}}\sbrac{\binom{n}{2k-1} + O\rbrac{n^{2k-1-\delta}}} + n^{2-\delta} \sbrac{\binom{n}{2k-2} + O\rbrac{n^{2k-2-\delta}}} \\
           &= \frac{3k-2}{(2k-1)!} n^{2k-\delta} + O\rbrac{n^{2k-2\delta}}
    \end{align*}
    {\bf Explanation:} Given $v$ and $c$ we have to determine an appropriate $K$. Assume without loss of generality that $v \in A$. If $v\in Y$ then we have to choose a hyperedge $Y \in \mc{S}_A$ containing $v$, and then choose a set $Z$ such that $\binom{Z}2 \subseteq E_B$. This explains the first term. If $v\in Z$ then we have to choose a hyperedge $Y \in \mc{S}_B$, and then choose a set $Z'$ such that $\binom{Z' \cup \{v\}}2 \subseteq E_A$. This explains the second term. \\
    Finally we consider a vertex $aa' \in E_A$ (the case of a pair in $E_B$ is similar).  Then similarly to the above calculations we have
    \begin{align*}
           d_{\mc{H}_1}(aa') &= n^{2-\delta} \sbrac{\binom{n}{2k-3} + O\rbrac{n^{2k-3-\delta}}}|W|= \frac{3k-2}{(2k-1)!} n^{2k-\delta} + O\rbrac{n^{2k-2\delta}}.
    \end{align*}
Thus (H1) holds for any $\eps < \delta$ and $\kappa$ sufficiently large. 

\item We fix two vertices of $\mc{H}_1$. There are several cases. We will describe in detail one case. Consider the case where we fix $ab \in E(K_{n, n})$ and some $v_c$ for some $v \in V(K_{n, n}), c \in W$. Then we need to determine a $K$ containing $a, b, v$ and having color $c$. It is possible that $v =a$ or $v=b$, but even in that case we are choosing some edge from $S_A$ or $S_B$ containing one of our fixed vertices and then $2k-2$ vertices (and we already know the color $c$) so we have $O(n^{2k-1-\delta})$ choices. For the other cases (i.e. fixing a pair of other types vertices from $\mc{H}_1$), we always find that we always fix at least three vertices from $K_{n, n}$ or else two vertices from 
$K_{n, n}$ plus a color, and there are always $O(n^{2k-1-\delta})$ choices to complete $K$. Thus (H2) is satisfied for some small $\eps, \delta$ (see \eqref{eqn:constants}).

\item In $\mc{H}_2$, the degree of every vertex in $R$ is 1, and the degree of every vertex in $P$ is $|W'|=n^{1-\delta}$. Thus (H3) easily holds.

\item In $\mc{H}_2$, $d(x,v)\le 1$ for each $x\in P$ and $v\in R$. On the other hand, $d^{-\eps}\delta_P(\mc{H}_2) \ge \Omega(n^{1-\eps}/n^{\eps(2k-\delta)})$ which tends to infinity when $\eps$ is sufficiently small. Thus (H4) also holds.
	\end{enumerate}

\subsection{$\mc{C}$ conditions}
Here we check the conditions from Section \ref{sec:Ccond}
\begin{enumerate}[(C1)]
\item Each $C\in\mc{C}$ contains at least 4 and at most $2k$ hyperedges from $\mc{H}_1$.
\newcommand{\Cj}{\mc{C}^{(j)}}
\item Note that $\mc{C} = \bigcup_j\Cj$ where we recall that $\mc{C}^{(j)}$ is the set of all conflicts of size $j$. Let $j\in \cbrac{4,6,\ldots, 2k}$. Fix an edge $e_{K_1,c}$ from $\mc{H}_1$. To bound $\Delta(\Cj)$, we need to bound the number of ways in which $e_{K_1,c}$ can be completed to a conflict of size $j$. Our conflict is of the form $\{e_{K_1, c}, e_{K_2, c'}, \ldots , e_{K_j, c'}\}$ where any two consecutive $K_i, K_{i+1}$ (subscript taken modulo $j$) share a vertex. Since the conflict must be a matching in $\mc{H}_1$ by definition, the only way that two consecutive $K_i, K_{i+1}$ could share more than one vertex is if their small parts completely overlap.

Letting $K_i$ have bipartition $Y_i \cup Z_i$, we will first choose the $Y_i$, recalling that each $Y_i$ is an edge of $\mc{S}_A$ or $\mc{S}_B$. Suppose $x \in V(K_1) \cap V(K_2)$ and $y \in V(K_1) \cap V(K_j)$. Note that $x\neq y$ since $K_2,K_j$ are distinct bicliques of the same color (so they are disjoint). By (S4), there are $O(n^{(j-1)(1-\delta)-1})$ choices for the $(Y_2, \ldots, Y_j)$ that would be $xy$-edge {\walks}. Now we count possibilities that are not $xy$-edge {\walks}. We can always partition $(Y_2, \ldots, Y_j)$ into maximal subsequences $(Y_i, Y_{i+1}, \ldots Y_{i'})$ which are edge {\walks}. Suppose this partition has $p\ge 2$ edge {\walks} whose lengths are $j_1, \ldots, j_p$, where $j_1 + \ldots + j_p = j-1$. First we will choose $p -\mathbb{1}_{x \in Y_2} - \mathbb{1}_{y\in Y_j}$ vertices, intending that each will be an endpoint of one of our edge {\walks}, where $x$ (resp. $y$) will be the endpoint of the first (resp. last) edge {\walk} if possible. Now to choose the edges in the $i$th edge {\walk} we have $O(n^{j_i(1-\delta)})$ choices by property (S3). Thus the number of ways to choose the $\{Y_2, \ldots, Y_j\}$ is 
\begin{equation}\label{eqn:Ychoice}
   O\rbrac{n^{p -\mathbb{1}_{x \in Y_2} - \mathbb{1}_{y\in Y_j} +  (j-1)(1-\delta)}}. 
\end{equation} We point out here that in the case discussed above where $Y_2,\ldots, Y_j$ formed an $xy$-edge {\walk}, we get the same count as in \eqref{eqn:Ychoice} by setting $p=1$ and both indicator functions to 1.\\
Now we choose the $Z_2, \ldots, Z_j$. We choose them in groups according to the maximal edge {\walks} we used to choose our $Y$ sets. For the first group $Z_2, \ldots Z_{j_1+1}$ we choose $j_1 (2k-1) -\mathbb{1}_{x \in Z_2}$ vertices. For $Z_{j_1+2}, \ldots Z_{j_1 + j_2+1}$ we choose $j_2(2k-1) - 1$ vertices (the ``$-1$'' is because $Z_{j_1 + 1}$ must share a vertex with $Y_{j_1} \cup Z_{j_1}$). Similarly, for $1<q<p$, assuming we have chosen the $Z$ sets for our first $q-1$ edge {\walks}, when we choose $Z$ sets for the $q$th edge {\walk} we have to choose $j_q(2k-1)-1$ vertices. Finally for the last edge {\walk} we choose $j_p (2k-1) -1-\mathbb{1}_{y \in Z_j}$ vertices. Thus the number of ways to choose our $Z$ sets is 
\begin{equation}\label{eqn:Zchoice}
   O\rbrac{n^{(j-1)(2k-1) -(p-1)-\mathbb{1}_{x \in Z_2} -\mathbb{1}_{y \in Z_j}}}.
\end{equation}
Multiplying \eqref{eqn:Ychoice} times \eqref{eqn:Zchoice} times $O(n)$ to pick a color $c'$, our bound on the number of conflicts is 
\[
O\rbrac{n^{(j-1)(2k-\delta)}} = O(d^{j-1}).
\]
Thus (C2) is satisfied for some $\ell = O(1)$.

\item Fix some set $J' \subseteq \mc{H}$ with $|J'|=j'$. We would like to bound the possible number of extensions $J \supseteq J'$ to conflicts $J \in \mc{C}$ with $|J|=j$. So we bound the number of ways to complete some $J$, say $J= \{e_{K_1, c}, \ldots, e_{K_j, c'}\}$, given our fixed $J' \subseteq J$. We assume that two bicliques in $J$ intersect if they are consecutive in the cyclic ordering indicated by the subscripts. Partition the elements of $J'$ into maximal subsequences of consecutive elements, and say there are $p$ parts in this partition. To determine the rest of $J$ we need to choose $p$ sequences of colored bicliques where consecutive bicliques intersect (and where the first and last bicliques intersect with the appropriate bicliques from $J'$).  To choose such a sequence of length $q$, we would sequentially choose $q-1$ bicliques each containing some already fixed vertex, and then the last biclique contains two fixed vertices. There are $O(n^{2k})$ ways to choose a biclique with one fixed vertex, and $O(n^{2k-1})$ ways with two fixed vertices, so there are $O(n^{2kq-1})$ ways to complete our sequence of length $q$. The sum of the length of all our sequences of bicliques to be chosen is $j-j'$. Putting it all together, there are $O\rbrac{n^{2k(j-j')-p}}$ ways to choose all the bicliques of $J$. If $J'$ has at least one colored biclique in each color $c, c'$ then we do not choose a color to form $J$ and since $p\ge 1$ the number of extensions $J$ is $ O\rbrac{n^{2k(j-j')-1}} \le d^{j-j' - \eps}$ since we can choose $\eps, \delta$ to be small (and they can depend on $k$). If $J'$ only has one color, then we must have $p \ge 2$ and we have $O(n)$ choices for the other color, so the number of extensions $J$ is $ O\rbrac{n^{2k(j-j')-2} \cdot n}\le d^{j-j' - \eps}$ since $\eps$ is small.
\end{enumerate}
\subsection{$\mc{D}$ conditions}
Here we check the conditions from Section \ref{sec:Dcond}

	\begin{enumerate}[(D1)]
		\item For all $D \in \mc{D}$, we have $2 \le |D\cap \mc{H}_2| \le |D|\le 2k$.
        \item First assume $j_1 \ge 1$.  To determine a conflict in $\mathcal{D}^{(j_1, j_2)}_x$, we first choose two colors in $O(n^2)$ ways, then we choose $j_1$ bicliques in $O(n^{j_1(2k+1)-p})$ ways, where $p\le 2$ is the number of vertices our edge $x \in E(K_{n, n})$ has in common with our bicliques. Then we choose $j_2-1$ additional edges, whose union (together with $x$) is a set of vertex disjoint paths connecting our bicliques. Note that these edges form $j_1$ paths. So we choose $j_2-j_1+p-2$ vertices to be the internal vertices of our paths (taking care not to include the $2-p$ that are endpoints of $x$). This gives
        \[
        |\mathcal{D}^{(j_1, j_2)}_x| \le O\rbrac{n^{2+j_1(2k+1)-p +j_2-j_1+p-2}} \le O\rbrac{n^{2kj_1 +j_2}} \le  d^{j_1 + \varepsilon^4} \delta_{P}(\mathcal{H}_2)^{j_2}
        \]
        where we have used that $\delta_P(\mc{H}_2) = n^{1-\delta}$, and where the last inequality is justified since the power of $n$ on the right hand side is $(2k-\delta)(j_1+\eps^4) + (1-\delta)j_2$ which we can make larger than $2kj_1+j_2$ by choosing $\delta$ sufficiently small depending on $k, \eps$. 

        In the case when $j_1=0$, we have $j_2 \le 2k$. In other words, this conflict consists of a cycle colored with $2$ colors from $W'$.  We first choose 2 colors in $|W'|^2 = n^{2-2\delta}$ ways. We then choose $j_2-2$ vertices other than those in $x$ to form a cycle.  
        This gives 
        \[
        |\mathcal{D}^{(j_1, j_2)}_x| \le O\rbrac{n^{j_2-2\delta}} \le d^{\varepsilon^4}\delta_P(\mc{H}_2)^{j_2}
        \]
        where the last inequality is justified since the power of $n$ on the right hand side is $(2k-\delta)\eps^4 + (1-\delta)j_2$ which we can make larger than $j_2-2\delta$ by choosing $\delta \ll \eps \ll 1/k$ as in \eqref{eqn:constants}. 

        \item We fix an edge $x$ and a set $J'$ of $j' \ge 1$ bicliques of the same color. We bound the number of conflicts $J_1 \cup J_2 \in \mc{D}$ with $x$ incident with $J_2$. We also require that $J_i \subseteq \mc{H}_i$, $|J_i|=j_i$, $J' \subseteq J_1$. There are $O(n^{(2k+1)(j_1-j')-p})$ ways to choose the rest of our bicliques
        if the bicliques in $J_1 \setminus J'$ are to use $p \le 2$ of the endpoints of our edge $x$. 
        Then we have $O(n)$ ways to choose another color and $O(n^{j_2 - j_1 })$ ways to connect our bicliques with at least $j_1-1$ paths (since $x$ could be the entirety of one of our paths) whose total length is $j_2-1$. Altogether the number of possible conflicts $J_1 \cup J_2$ is at most (under the asssumption $j'+p \ge 2$
        \[
        O\rbrac{n^{(2k+1)(j_1-j')-p+1 +j_2 - j_1  }} =  O\rbrac{n^{2k(j_1-j')-p+1-j' +j_2  }} \le d^{j_1 - j' - \eps}\delta_P(\mc{H}_2)^{j_2},
        \]
        where the final inequality follows since $\eps, \delta$ are small. Now we handle the remaining case, where $j'=1, p=0$. In this case our edges in $J_2$ (except $\{x\}$) comprise at least $j_1$ paths and so there are $O(n^{j_2 - 1 - j_1})$ ways to choose them. In this case we get the bound
        \[
        O\rbrac{n^{(2k+1)(j_1-1)+1 +j_2 -1- j_1  }} =  O\rbrac{n^{2k(j_1-1)-1 +j_2  }} \le d^{j_1 - 1 - \eps}\delta_P(\mc{H}_2)^{j_2},
        \]
        since $\eps, \delta$ are small.

        \item First assume $j_1 \ge 1$.  To determine a conflict in $\mathcal{D}^{(j_1, j_2)}_{x, y}$, we first choose two colors in $n^2$ ways, then we choose $j_1$ bicliques in $O(n^{j_1(2k+1)-p})$ ways, where $p\le 4$ is the number of vertices  $x \cup y $ has in common with our bicliques. Then we choose $j_2-2$ additional edges, whose union (together with $x, y$) is a set of vertex disjoint paths connecting our bicliques. Note that these edges form $j_1$ paths. So we choose at most $j_2-j_1+p-3$ vertices to be the internal vertices of our paths (taking care not to include the $|x \cup y|-p \ge 3-p$ that are in $x\cup y$). This gives
        \[
        |\mathcal{D}^{(j_1, j_2)}_{x,y}| \le O\rbrac{n^{2+j_1(2k+1)-p +j_2-j_1+p-3}} \le O\rbrac{n^{2kj_1 +j_2-1}} \le  d^{j_1 - \varepsilon} \delta_{P}(\mathcal{H}_2)^{j_2}
        \]
        since $\eps, \delta$ are small. 

        In the case when $j_1=0$, we have $j_2 \le  2k$. In other words, this conflict consists of cycle colored with $2$ colors from $W'$.  We first choose 2 colors in $|W'|^2 = n^{2-2\delta}$ ways. We then choose at least $j_2-3$ vertices other than those in $x \cup y$ to form a cycle.  
        This gives 
        \[
        |\mathcal{D}^{(j_1, j_2)}_{x, y}| \le O\rbrac{n^{2+j_2-3}} = O\rbrac{n^{j_2-1}} \le d^{ - \varepsilon} \delta_{P}(\mathcal{H}_2)^{j_2}
        \]
         since $\eps, \delta$ are small. 
	\end{enumerate}

\section{Concluding remarks}\label{sec:conclusion}

There are many cases of generalized Ramsey numbers that are still wide open. Most related to our current paper, it would be interesting to study $r(K_{n, n}, C_{2k}, q)$ for other values of $q >3$. 

We conjecture that our upper bound in Theorem \ref{thm:main} is tight in general. 
\begin{conjecture}
    For all $k \ge 2$ we have
    \[r(K_{n,n}, C_{2k}, 3) = \frac{3k-2}{2(k-1)(2k-1)}n +o(n).\]
\end{conjecture}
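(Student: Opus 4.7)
The chain of inequalities in Section~\ref{sec:LB} is already \emph{tight} at the extremal configuration corresponding to the upper-bound construction: substituting $A=n^2/(k-1)$ and $|\mc{F}'|=n^2/[(k-1)(2k-1)]$ into \eqref{eqn:gLBwithF} and the convexity bound giving \eqref{eqn:4} makes every step an equality and yields exactly $g\ge\frac{(3k-2)n}{2(k-1)(2k-1)}$. The weakness of the current argument is therefore the worst-case minimization over $(A,|\mc{F}'|)$: for $k\ge 4$ the unconstrained minimum of the function $h$ occurs at a value of $A$ far below its upper bound $n^2/(k-1)$, and nothing in Section~\ref{sec:LB} prevents the minimizer from landing there. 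Thus proving the conjecture reduces to showing that $A$ must lie near its maximum in any near-optimal coloring.

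The plan is to prove a \emph{stability lemma}: in any $(C_{2k},3)$-coloring using $(1+o(1))\frac{(3k-2)n}{2(k-1)(2k-1)}$ colors, almost every monochromatic component $C$ must satisfy $a_j(C)=0$ for $1\le j\le k-2$, and all but $o(n^2)$ edges of $K_{n,n}$ must lie in a component of $\mc{F}'$. From this structural conclusion one obtains $(k-1)A=\sum_{\mc{F}'}|E(C)|=n^2-o(n^2)$, hence $A=n^2/(k-1)-o(n^2)$; combined with the convexity bound and \eqref{eqn:gLBwithF} this yields the matching lower bound. To establish the stability lemma I would attempt local surgery: given a component $C$ containing an ``intermediate-degree'' vertex $v$ (one contributing to $a_j(C)$ for some $1\le j\le k-2$ during the stripping process), reroute some incident edges of $v$ into a nearby biclique component so as not to increase the color count, using the pairwise cross-color constraint $|Z\cap Z'|\le k-1$ from the derivation of \eqref{eqn:4} to certify that the reroutings create no new 2-colored $C_{2k}$.

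The main obstacle is that $(C_{2k},3)$-colorings are globally rigid: a single local swap can create a forbidden 2-colored $C_{2k}$ via a long alternating walk involving many bicliques of just two colors, exactly the obstructions that motivated the ``annoyance contraction'' argument in the proof of Claim~\ref{clm:legit}. Handling these long alternating obstructions appears to require structural constraints beyond the pairwise biclique-intersection bound used in Section~\ref{sec:LB}. An alternative strategy is to set up a linear program whose variables are indexed by the possible component profiles $(a_0(C),\ldots,a_{k-1}(C))$, with constraints from \eqref{eqn:1}, \eqref{eqn:2}, the pairwise bound leading to \eqref{eqn:4}, and higher-order cross-color constraints capturing long alternating walks of bicliques of two colors; proving that every $(C_{2k},3)$-coloring is feasible for such an LP, and that its optimum matches $\frac{3k-2}{2(k-1)(2k-1)}$, is where I expect the main difficulty to lie.
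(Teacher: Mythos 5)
The statement you are addressing is a conjecture, and the paper does not prove it. The closest the paper comes is a conditional theorem in Section~\ref{sec:conclusion}, which derives the conjectured bound under the \emph{assumption} that every monochromatic component is a complete bipartite graph $K_{k-1,b}$ with $b\ge k$. Your observation that substituting $A=n^2/(k-1)$ and $|\mc{F}'|=n^2/[(k-1)(2k-1)]$ into \eqref{eqn:gLBwithF} together with the convexity bound \eqref{eqn:4} makes every step tight and recovers exactly the coefficient $\frac{3k-2}{2(k-1)(2k-1)}$ is a correct computation; it is essentially the same calculation the paper's conditional theorem carries out, with the quantity $B$ there playing the role of $A$ here.

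However, what you have written is a research plan, not a proof, and you say as much. The stability lemma you describe --- that in any near-optimal coloring almost every component has $a_j(C)=0$ for $1\le j\le k-2$ and almost all of $n^2$ edges lie in components of $\mc{F}'$, so that $A=n^2/(k-1)-o(n^2)$ --- is precisely the structural ingredient the paper's conditional theorem \emph{hypothesizes} rather than establishes. The local-surgery route runs into the obstacle you correctly flag: rerouting even one vertex's incident edges can close a 2-colored $C_{2k}$ via a long alternating walk of bicliques in two colors, and the pairwise intersection constraint used to derive \eqref{eqn:4} (two bicliques of different colors cannot share two vertices on their large sides) says nothing about such chains. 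It is telling that the paper's own upper-bound construction in Section~\ref{sec:UB} has to invoke a global conflict-free hypergraph matching theorem to exclude exactly these obstructions; there is no indication that the argument can be localized into a surgery performed inside an arbitrary near-optimal coloring. The LP alternative faces the analogous difficulty: one must first prove that the long-alternating-walk inequalities are valid for all $(C_{2k},3)$-colorings and then that they suffice to pin the optimum at the conjectured value, and neither is addressed. So your proposal correctly maps the terrain --- it identifies where the lower-bound argument of Section~\ref{sec:LB} loses, and what a completion would require --- but it leaves open the very gap it identifies, which is the open problem.
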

The conjecture is known to be true for $k=2$ by Joos and Mubayi \cite{JM} and now $k=3$ by Theorem \ref{thm:main}. We provide a little motivation by proving the conjecture under some (believable, but admittedly strong) additional assumptions. 

\begin{theorem}
    Suppose we have a $(C_{2k}, 3)$-coloring of $K_{n, n}$ such that every monochromatic component is of the form $K_{k-1, b}$ where $b \ge k$ (and different components can have different values of $b$). Then the coloring uses at least $\frac{3k-2}{2(k-1)(2k-1)}n$ colors. 
\end{theorem}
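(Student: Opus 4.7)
The plan is a short double-counting argument parameterized by the bicliques. Let $b_1,\ldots,b_m$ be the large-side sizes of all monochromatic components, where each component is $K_{k-1,b_i}$ with $b_i\ge k$; let $m$ be the total number of components and $g$ the number of colors. I will derive three basic estimates and combine them.

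First, edge counting: since the components partition $E(K_{n,n})$, summing their edge counts gives $\sum_i (k-1)b_i=n^2$, so $\sum_i b_i = n^2/(k-1)$. Second, vertex coverage: each vertex lies in at most one component per color, hence in at most $g$ components total, so
\[ \sum_i|V(K_{k-1,b_i})| \;=\; m(k-1)+\frac{n^2}{k-1} \;\le\; 2ng. \]

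Third, the key structural restriction. I claim that any two components of distinct colors share at most one vertex in their large sides. Suppose for contradiction that components $K, K'$ of colors $c\ne c'$ have large sides $Y,Y'$ both containing vertices $u,v$. Then $Y,Y'$ lie on the same side of the bipartition $A\cup B$, and the small sides $X,X'$ lie on the other side. The small sides must be disjoint: any $x\in X\cap X'$ would force the edge $xu$ to carry both colors $c$ and $c'$. Because $b_i\ge k$, inside $K$ there is a $(u,v)$-path of length $2k-2$ using all $k-1$ vertices of $X$ and $k-2$ further vertices of $Y\setminus\{u,v\}$; inside $K'$ there is a $(u,v)$-path of length $2$ through any vertex of $X'$. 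These paths are internally vertex-disjoint (as $X\cap X'=\emptyset$ and $X'$ lies on the opposite side of $A\cup B$ from $Y$), so their union is a 2-colored $C_{2k}$, a contradiction. Hence each same-side pair in $A$ (respectively in $B$) is contained in at most one large side, giving
\[ \sum_i\binom{b_i}{2} \;\le\; 2\binom{n}{2} \;=\; n(n-1). \]

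Finally, I combine the three bounds. We have $\sum_i b_i^2 = 2\sum_i\binom{b_i}{2}+\sum_i b_i \le 2n(n-1)+n^2/(k-1) \le n^2(2k-1)/(k-1)$, so Cauchy--Schwarz gives $m\ge(\sum_i b_i)^2/\sum_i b_i^2 \ge n^2/((k-1)(2k-1))$. Substituting into the vertex inequality yields
\[ 2ng \;\ge\; m(k-1)+\frac{n^2}{k-1} \;\ge\; \frac{n^2}{2k-1}+\frac{n^2}{k-1} \;=\; \frac{(3k-2)n^2}{(k-1)(2k-1)}, \]
which rearranges to the desired bound $g\ge\tfrac{3k-2}{2(k-1)(2k-1)}n$. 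I do not expect a serious obstacle; the most delicate point is verifying the disjointness $X\cap X'=\emptyset$, which is essential for the two constructed paths to be internally vertex-disjoint and thus form a genuine $C_{2k}$.
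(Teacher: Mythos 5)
Your proof is correct and follows essentially the same approach as the paper's: the same edge-count and vertex-count identities, the same structural restriction that two large sides cannot share two vertices, and the same convexity/Cauchy--Schwarz step to lower-bound the number of components. The only difference is that you supply an explicit proof of the shared-vertices claim (exhibiting the two internally disjoint $(u,v)$-paths forming a $2$-colored $C_{2k}$), which the paper merely asserts.
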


\begin{proof}
    Let $F$ be the set of monochromatic components, and for $j=1, \ldots, |F|$ let $b_j$ be the value of $b$ for the $j$th  component. Let $B:=\sum_{j} b_j$. Then we have
    \begin{equation}\label{eqn:conc1}
        n^2 = (k-1)B, \qquad 2ng \ge (k-1)|F|+B.
    \end{equation}
    Indeed, the first equation is just counting edges of $K_{n, n}$ and the second is counting the vertices in monochromatic components. Now since the $b_j \ge k$, we cannot have two monochromatic components $C_i, C_j$ that share two vertices on the sides of their bipartitions having $b_i, b_j$ vertices respectively. Therefore 
    \begin{equation}\label{eqn:conc2}
        n^2 \ge \sum_j \binom{b_j}{2} \ge \frac12 B \rbrac{\frac{B}{|F|} -1}
    \end{equation}
    where the second inequality follows by convexity. Now \eqref{eqn:conc1} gives
    \begin{equation}\label{eqn:conc3}
        g \ge \frac{(k-1)|F|+B}{2n} = \frac{(k-1)|F|+\frac{n^2}{k-1}}{2n}
    \end{equation}
and \eqref{eqn:conc2} gives
\begin{equation}\label{eqn:conc4}
        |F| \ge \frac{B^2}{B+2n^2} = \frac{n^2}{(k-1)(2k-1)}.
    \end{equation}
    Now \eqref{eqn:conc3} and \eqref{eqn:conc4} give
    \[
    g \ge \frac{3k-2}{2(k-1)(2k-1)}n.
    \]
\end{proof}
Note that the above can be relaxed a bit to say that ``almost'' all the edges of $K_{n, n}$ are colored using the components of the type we described (allowing for $o(n^2)$ edges to be colored some other way). Indeed, this would just mean that $B = \frac{n^2}{k-1}+o(n^2)$ and we would get the bound $ g \ge \frac{3k-2}{2(k-1)(2k-1)}n+o(n).$

\bibliographystyle{abbrv}

\end{document}